\newtheorem{thm}{Theorem}%[section]
\newtheorem{lem}{Lemma}%[section]
\newtheorem{cor}{Corollary}%[section]
\theoremstyle{remark}
\newtheorem{rem}{Remark}%[section]
\newcommand{\R}{\mathbb{R}}
\newcommand{\Rp}{\mathbb{R}_+}
\newcommand{\Rm}{\mathbb{R}_-}
\renewcommand{\P}{\textbf{P}}
\newcommand{\E}{\textbf{E}}
\newcommand{\I}{\textbf{I}}
\newcommand{\SK}{\mathcal{S}^*}
\title{Random walks with long-tailed increments}
\author{Serguei Foss and Stan Zachary}
\date{\today}
\begin{document}

\begin{center}
  \bfseries THE MAXIMUM ON A RANDOM TIME INTERVAL OF A RANDOM WALK
  WITH LONG-TAILED INCREMENTS AND NEGATIVE DRIFT
\end{center}

\begin{center}
  \sc
  By Serguei Foss and Stan Zachary
  \footnotetext{
    {\it American Mathematical Society 1991 subject classifications.\/}
    Primary 60G70; secondary  60K30, 60K25

    {\it Key words and phrases.\/} ruin probability, long-tailed
    distribution, subexponential distribution.

    {\it Short title.\/}  Maximum on a random time interval.
    }
\end{center}

\begin{center}
  \textit{Heriot-Watt University}
\end{center}

\begin{quotation}\small
  We study the asymptotics for the maximum on a random time interval
  of a random walk with a long-tailed distribution of its increments
  and negative drift.  We extend to a general stopping time a result
  by Asmussen (1998), simplify its proof, and give some converses.
\end{quotation}

\section{Introduction}

Random walks with long-tailed increments have many important
applications in insurance, finance, queueing networks, storage
processes, and the study of extreme events in nature and elsewhere.
See, for example, Embrechts \textit{et al.}\ (1997), Asmussen (1998,
1999) and Greiner \textit{et al.\/} (1999) for some background.  In
this paper we study the distribution of the maximum of such a random
walk over a random time interval.

Let $F$ be the distribution function of the increments of a random
walk~$\{S_n\}_{n\ge0}$ with $S_0=0$.  Suppose that this distribution
has a finite negative mean and that $F$ is long-tailed in the positive
direction (see below for this and other definitions).  Of interest is
the asymptotic distribution of the maximum of $\{S_n\}$ over the
interval~$[0,\sigma]$ defined by some stopping time~$\sigma$.  Some
results for the case where $\sigma$ is independent of $\{S_n\}$ are
known (again see below).  However, relatively little is known for
other stopping times.  Asmussen (1998) gives the expected result for
the case $\sigma=\tau$, where
\begin{equation}
  \label{eq:taudef}
  \tau = \min\{n\ge 1: S_n\le0\}
\end{equation}
(see also Heath \textit{et al.\/} (1997) and Greiner \textit{et al.\/}
(1999)).  This result requires the further condition that the
distribution function~$F$ has a right tail which belongs to the
class~$\SK$ introduced by Kl\"uppelberg (1988) (we shall simply write
$F\in\SK$).  In the present paper we extend Asmussen's result to a
general stopping time~$\sigma$.  In doing so we also simplify the
derivation of the original result, and we show that the
condition~$F\in\SK$ is necessary as well as sufficient for it to hold.
We also give a useful characterisation of the class~$\SK$.  Finally,
as a corollary of our results, we give a probabilistic proof of the
known result that any distribution function~$G\in\SK$ is
subexponential.

Thus, let $\{\xi_n\}_{n\ge1}$ be a sequence of independent identically
distributed random variables with distribution function~$F$.  We
assume throughout that
\begin{equation}\label{eq:neg}\tag{NEG}
  \E\xi_n = -m < 0.
\end{equation}
We further assume throughout that the distribution function~$F$ is
\emph{long-tailed} (LT), that is, that
\begin{equation}
  \label{eq:lt}\tag{LT}
  \text{$\overline{F}(x)>0$ for all $x$},\qquad
  \lim_{x\to\infty}\frac{\overline{F}(x-h)}{\overline{F}(x)} = 1,
  \quad\text{for all fixed $h>0$}.
\end{equation}
Here, for any distribution function~$G$ on $\R$, $\overline{G}$
denotes the tail distribution given by $\overline{G}(x)=1-G(x)$.
Define the random walk $\{S_n\}_{n\ge0}$ by
$S_0=0$, $S_n=\sum_{i=1}^n\xi_i$ for $n\ge1$.
For $n\ge0$, let
\begin{math}
  M_n = \max_{0\leq i\leq n} S_i,
\end{math}
and let
\begin{math}
  M = \sup_{n\geq 0} S_n.
\end{math}
Similarly, for any stopping time $\sigma$ (with respect to any
filtration~$\{\mathcal{F}_n\}_{n\ge1}$ such that, for each $n$, $\xi_n$
is measurable with respect to $\mathcal{F}_n$ and $\xi_{n+1}$ is
independent of $\mathcal{F}_n$), let
\begin{math}
  M_\sigma = \max_{0\leq i\leq\sigma} S_i.
\end{math}
We are interested in the asymptotic distribution of $M_\sigma$ for a
general stopping time~$\sigma$ (which need not be a.s.\ finite).  In
particular we are interested in obtaining conditions under which
\begin{equation}
  \label{eq:Msigma}
  \lim_{x\to\infty}\frac{\P(M_\sigma>x)}{\overline{F}(x)} = \E\sigma.
\end{equation}

We require first some further definitions.  For any distribution
function~$G$ on $\R$ define the integrated, or \emph{second-tail},
distribution function~$G^s$ by
\begin{math}
  \overline{G^s}(x) = \min \bigl( 1,
    \int_x^{\infty} \overline{G}(t)\, dt
  \bigr).
\end{math}
A distribution function~$G$ on $\Rp$ is \emph{subexponential} if and
only if $\overline{G}(x)>0$ for all $x$ and
$\lim_{x\to\infty}\overline{G^{*2}}(x)/\overline{G}(x) = 2$
(where $G^{*2}$ is the convolution of $G$ with itself).  More
generally, a distribution function~$G$ on $\R$ is subexponential if
and only if $G^+$ is subexponential, where
$G^+=G\I_{\R_+}$ and $\I_{\R_+}$ is the indicator function of $\R_+$.
It is
known that the subexponentiality of a distribution depends only on its
(right) tail, and that a subexponential distribution is long-tailed.
When $F$ is subexponential, it is elementary that the
result~\eqref{eq:Msigma} holds for any a.s.\ constant $\sigma$.  (The
condition~\eqref{eq:neg} is not required here.  See, for example,
Embrechts \textit{et al.}\ (1997), or Sigman (1999).)  In the case
where $F^s$ is subexponential, the
asymptotic distribution of $M$ is known---in particular
$\P(M>x)=O(\overline{F^s}(x))$ as $x\to\infty$ (see Veraverbeke
(1977), Embrechts and Veraverbeke (1982), and, for a simpler
treatment, Embrechts \textit{et al.}\ (1997)).
% The asymptotic
% distribution of $M$ for a modulated random walk is studied by
% Jelenkovic and Lazar (1998), Alsmeyer and Sgibnev (1999), Asmussen and
% M\o ller (1999), and Foss and
% Zachary (2001).

A distribution function~$G$ on $\R$ belongs to the class~$\SK$ if and
only if $\overline{G}(x)>0$ for all $x$ and
\begin{equation}
  \label{eq:skdef}
  \int_0^x\overline{G}(x-y)\overline{G}(y)\,dy
  \sim 2m_{G^+}\overline{G}(x),\quad\text{as $x \to \infty$},
\end{equation}
where
\begin{math}
  m_G{^+} = \int_0^\infty \overline{G}(x) \,dx
\end{math}
is the mean of $G^+$.  It is again known that the property $G\in\SK$
depends only on the tail of $G$, and that if $G\in\SK$, then both $G$
and $G^s$ are subexponential---see Kl\"uppelberg (1988).

% (Kl\"uppelberg defines $\SK$ as a subclass of the distributions on
% $\Rp$, but the present extension to distributions on $\R$ is
% straightforward.)

Consider first the case where the stopping time~$\sigma$ is
independent of the sequence~$\{\xi_n\}$.  Here, under the further
condition that the distribution function~$F$ is subexponential, the
result~\eqref{eq:Msigma} is well known to hold for any stopping
time~$\sigma$ such that
\begin{equation}
  \label{eq:expbdd}
  \E\exp\lambda\sigma<\infty, \quad\text{for some $\lambda>0$}.
\end{equation}
In this particular case the condition~\eqref{eq:neg} is not required
(see, for example, Embrechts \textit{et al.}\ (1997) and the
references therein.)  The condition~\eqref{eq:expbdd} may be dropped
by suitably strengthening the subexponentiality condition on $F$ (see
Borovkov and Borovkov (2001), Korshunov (2001)).

% It follows easily from the results of Borovkov and Borovkov (2001)
% (who obtain estimates, uniform in $n$, for
% $\P(M_n>x)-n\overline{F}(x)$) that it is sufficient to require that
% the tail of the distribution function~$F$ be additionally
% \emph{regularly varying}.  A similar straightforward application of
% the results of Korshunov (2001) shows that the
% condition~\eqref{eq:expbdd} may again be dropped under a suitable
% alternative strengthening of the subexponentiality condition on $F$,
% together with the further requirement that the second-tail
% distribution function~$F^s$ be also subexponential.

The first results for a stopping time~$\sigma$ which is not
independent of the sequence~$\{\xi_n\}$ are given by Heath \textit{et
  al.\/} (1997, Proposition~2.1) and, under more general conditions,
by Asmussen (1998, Theorem 2.1)---see also Greiner \textit{et al.\/}
(1999, Theorem~3.3).  Asmussen shows that if, in addition to our
present conditions~\eqref{eq:neg} and \eqref{eq:lt}, we have
$F\in\SK$, then the result~\eqref{eq:Msigma} holds with $\sigma=\tau$,
where $\tau$ is as given by \eqref{eq:taudef}.  (Asmussen omits to
state formally the necessity of some condition of the form $F\in\SK$.
However, this is rectified in the more recent paper by Asmussen
\textit{et al.}\ (2001).  See also Asmussen, Foss and Korshunov (2002)
for further extensions.)

The main result of the present paper is Theorem~\ref{r:ms}, which
shows that, again under the condition~$F\in\SK$, the
result~\eqref{eq:Msigma} holds for a general stopping time~$\sigma$.
The theorem also shows that, for a wide class of stopping
times~$\sigma$, including $\sigma\equiv\tau$, the condition~$F\in\SK$
is necessary as well as sufficient for this result.  In proving
Theorem~\ref{r:ms} we of necessity simplify the derivation of
Asmussen's original result, which was quite tricky (as was that of
Greiner \textit{et al.\/} (1999)).  The proof requires
Theorem~\ref{r:se3}, which gives a characterisation of the
class~$\SK$.  One half of this theorem is due to Asmussen \textit{et
  al.}\ (2001).  Finally, as already remarked and as a very simple
corollary of Theorem~\ref{r:ms}, we give a probabilistic proof of the
result referred to above that any $G\in\SK$ is subexponential.

\section{Results}
\label{sec:results}

We state first our main result, which is Theorem~\ref{r:ms}.  We give
also Corollaries~\ref{r:kl} and \ref{r:ind}.  We then proceed to the
proofs, which are
via Theorem~\ref{r:se3} and a sequence of lemmas.  A further necessary
lemma, which is a fairly routine application of some results from
renewal theory, is relegated to the Appendix.  As remarked above, the
proof of part~(i) of Theorem~\ref{r:ms}, in the case $\sigma=\tau$,
follows the general approach of Asmussen (1998) with some
simplification of the argument.  Recall that the
conditions~\eqref{eq:neg} and \eqref{eq:lt} are assumed to hold
throughout.

\begin{thm}\label{r:ms}~

  (i) Suppose that $F\in\SK$.  Let $\sigma\le\infty$ be any stopping
  time.  Then
  \begin{equation}
    \label{eq:sigma}
    \lim_{x\to\infty}\frac{\P(M_{\sigma}>x)}{\overline{F}(x)} = \E\sigma.
  \end{equation}

  (ii) Suppose that the condition~\eqref{eq:sigma} holds for some
  stopping time~$\sigma$ such that $\P(\sigma>0)>0$, $\P(S_\sigma\le
  0)=1$ and $\E\sigma<\infty$.  Then $F\in\SK$.
\end{thm}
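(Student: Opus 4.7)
The plan is to prove parts (i) and (ii) separately, both relying on Theorem~\ref{r:se3}, which gives a useful characterisation of $\SK$. For part (i), I would split the argument into a lower bound using only the long-tail assumption~\eqref{eq:lt} and an upper bound using $F\in\SK$. For the lower bound, invoke a single-big-jump principle: for fixed $N$ and $A>0$, introduce the disjoint events
\begin{equation*}
  B_n = \{\sigma\ge n,\ S_{n-1}\ge -A,\ \max_{k<n}\xi_k\le x-A,\ \xi_n > x+A\},\qquad 1\le n\le N,
\end{equation*}
each contained in $\{M_\sigma>x\}$. Since $\{\sigma\ge n\}$ and the events involving $S_{n-1}$ and $\max_{k<n}\xi_k$ are all $\mathcal F_{n-1}$-measurable, conditioning on $\mathcal F_{n-1}$ and using the independence of $\xi_n$ gives
$\P(B_n) = \overline F(x+A)\cdot\P(\sigma\ge n,\ S_{n-1}\ge -A,\ \max_{k<n}\xi_k\le x-A)$.
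By~\eqref{eq:lt} the ratio $\overline F(x+A)/\overline F(x)\to 1$; summing and letting in turn $x\to\infty$, $A\to\infty$, and then $N\to\infty$ yields $\liminf\P(M_\sigma>x)/\overline F(x)\ge\E\sigma$, the case $\E\sigma=\infty$ being handled by truncation.

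For the upper bound, start from
\begin{equation*}
  \P(M_\sigma>x)\le\sum_{n\ge 1}\P(\sigma\ge n,\ S_n>x)
  =\sum_{n\ge 1}\E\bigl[\I(\sigma\ge n)\,\overline F(x-S_{n-1})\bigr],
\end{equation*}
using that $\{\sigma\ge n\}\in\mathcal F_{n-1}$ while $\xi_n$ is independent of $\mathcal F_{n-1}$. Theorem~\ref{r:se3} supplies an ``insensitivity'' function $h(x)\to\infty$ with $\overline F(x-y)/\overline F(x)\to 1$ uniformly for $|y|\le h(x)$, together with sharp control on the middle range of the self-convolution of $\overline F$. For each $n$, split the expectation according to whether $|S_{n-1}|\le h(x)$ or not; the main piece yields $\E\sigma\cdot\overline F(x)(1+o(1))$, while the complementary piece, summed over all $n$, is $o(\overline F(x))$ by the $\SK$ convolution bound~\eqref{eq:skdef} combined with the renewal-theoretic Appendix lemma (which bounds the distribution of $S_{n-1}$ on the event that $n$ is large and the walk has not yet exceeded $x$).

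For part (ii), observe that the lower-bound argument above uses only~\eqref{eq:lt}, so under the hypothesis~\eqref{eq:sigma} the ratio $\P(M_\sigma>x)/\overline F(x)$ converges to $\E\sigma$ from below via single big jumps alone, leaving no asymptotic ``room'' for other contributions. Since $\P(S_\sigma\le 0)=1$ and $\P(\sigma>0)>0$, any path contributing to $\{M_\sigma>x\}$ via two or more sizeable increments must, by time $\sigma$, net at least $x$ upwards on some partial sum and still end at $\le 0$; isolating this ``double-big-jump'' contribution produces a lower bound of the form $c\int_0^x\overline F(x-y)\overline F(y)\,dy + o(\overline F(x))$ for some explicit positive constant. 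The asymptotic equality forced by~\eqref{eq:sigma} then forces this integral to be $\sim 2m_{F^+}\overline F(x)$, which is exactly~\eqref{eq:skdef}. The main obstacle I anticipate lies in the upper bound of part~(i): when $\E\sigma$ is large or infinite one must balance the growing number of surviving terms against the decay of $\overline F(x-S_{n-1})$ as $S_{n-1}\to-\infty$, and Theorem~\ref{r:se3} together with the Appendix lemma are precisely tailored to make this trade-off work uniformly in the stopping time.
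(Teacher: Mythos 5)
Your lower bound for part~(i) is sound and is essentially Lemma~\ref{r:as1} of the paper, but your upper bound fails at its very first step. The union bound
$\P(M_\sigma>x)\le\sum_{n\ge1}\P(\sigma\ge n,\,S_n>x)$
computes the \emph{expected number of steps} the walk spends above level $x$ before $\sigma$, not the probability of ever exceeding $x$, and these differ by an order of magnitude. Already for $\sigma=\tau$ the cycle formula for the associated stationary workload process gives $\sum_{n\ge1}\P(\tau\ge n,\,S_n>x)=\E\tau\,\P(M>x)\sim\E\tau\,\overline{F^s}(x)/m$ (Veraverbeke's theorem), and since $\overline{F}(x)=o(\overline{F^s}(x))$ under \eqref{eq:lt}, your right-hand side is of strictly larger order than the target $\E\sigma\,\overline{F}(x)$: once the walk jumps above $x$ it typically overshoots by an amount tending to infinity and then remains above $x$ for a non-tight number of steps, so every such step is counted. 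The correct decomposition is by the \emph{first passage} time $\mu(x)=\min\{n:S_n>x\}$ and the pre-crossing position $S_{\mu(x)-1}$, i.e.\ the events $A_{\sigma,1}(x)$ and $A_{\sigma,2}(x)$ of the paper.

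Even after that repair, the entire content of the hypothesis $F\in\SK$ lies in showing $\P(A_{\sigma,2}(x))=o(\overline{F}(x))$ (crossing from a pre-jump position above $h(x)$), and your sketch supplies no mechanism for this. The appeal to ``the $\SK$ convolution bound combined with the renewal-theoretic Appendix lemma'' does not work as stated: Lemma~\ref{r:lt} gives \emph{lower} bounds on local probabilities of ladder heights and of $M$ (it is used for the converse direction), not an upper bound on the occupation measure of $\{S_{n-1}>h(x)\}$ before $\sigma$. The paper's route (Lemma~\ref{r:deltatau}) counts downcrossings $N^-(x)$ of level $x$ during a $\tau$-cycle, identifies $\E N^-(x)$ with $\E\tau\int_0^\infty\pi(x+dt)F(-t)$ via the stationary workload, evaluates this as $(m^-/m)\E\tau\overline{F}(x)$ by Theorem~\ref{r:se3}(a), and shows via Lemma~\ref{r:d} that the single-big-jump event $A_{\tau,1}(x)$ already exhausts this count, forcing $\P(A_{\tau,2}(x))=o(\overline{F}(x))$; Lemma~\ref{r:tausigma} then transfers this to a general stopping time by decomposing over descending ladder epochs. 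Your outline for part~(ii) is in the right spirit (a double-big-jump lower bound forcing \eqref{eq:skdef}) and corresponds to Theorem~\ref{r:se3}(b) combined with Lemmas~\ref{r:dsms} and \ref{r:tausigma}(ii), but it too needs the local limit results of Lemma~\ref{r:lt} to convert the ladder-height self-convolution that actually arises into the $F$-convolution appearing in \eqref{eq:skdef}.
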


\begin{rem}\label{r:rem1}
  Suppose again that the conditions of Theorem~\ref{r:ms}~(i) hold.
  In the case~$\E\sigma<\infty$, it follows from from that result (and
  the well-known result, from \eqref{eq:lt}, that
  $\overline{F}(x)=o(\overline{F^s}(x))$ as $x\to\infty$) that
  $\P(M_{\sigma}>x)=o(\overline{F^s}(x))$ as $x\to\infty$ (in contrast
  to the case $\sigma=\infty$ a.s.).  In the case $\sigma<\infty$ a.s.,
  $\E\sigma=\infty$, we have both $\overline{F}(x)=o(\P(M_{\sigma}>x))$ (by
  Theorem~\ref{r:ms}~(i)) and, again,
  $\P(M_{\sigma}>x)=o(\overline{F^s}(x))$, in each case as $x\to\infty$.
  We give a proof of the latter result in the Appendix.  In this case
  a rich variety of behaviour is possible.
\end{rem}

Corollary~\ref{r:kl} is due to Kl\"uppelberg (1988).  However, we use
the results of this paper to give a simple probabilistic proof.

\begin{cor}[Kl\"uppelberg]
  \label{r:kl}
  Suppose that a distribution function~$G$ on $\R$ belongs to $\SK$.
  Then $G$ is subexponential.
\end{cor}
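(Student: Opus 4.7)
The plan is to apply Theorem~\ref{r:ms}(i) with the deterministic stopping time $\sigma\equiv 2$ and to read off the subexponentiality of $G$ from the resulting asymptotic for $\P(M_{2}>x)$ via an elementary sandwich between $\P(M_{2}>x)$ and $\overline{G^{*2}}(x)$.

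First I would set up a suitable random walk. Since both membership in $\SK$ and the long-tailed property depend only on the right tail of a distribution, I associate to $G$ a distribution $F$ that shares the right tail of $G$ but whose left tail is modified (for instance by redistributing enough mass to a single large negative point, using that $m_{G^+}<\infty$) so that $\E\xi=-m<0$. Then $F\in\SK$, $F$ satisfies~\eqref{eq:lt}, and the standing assumption~\eqref{eq:neg} holds, so Theorem~\ref{r:ms}(i) applies to the random walk $\{S_n\}$ driven by $F$. Taking the (deterministic) stopping time $\sigma\equiv 2$, with $\E\sigma=2$, it yields $\P(M_{2}>x)\sim 2\overline{F}(x)$ as $x\to\infty$. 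Since for $x>0$ one has $\{S_{2}>x\}\subseteq\{M_{2}>x\}\subseteq\{S_{1}>x\}\cup\{S_{2}>x\}$, this gives
\[
  \overline{F^{*2}}(x) \;\le\; \P(M_{2}>x) \;\le\; \overline{F}(x)+\overline{F^{*2}}(x),
\]
and the left inequality supplies the upper bound $\limsup_{x\to\infty}\overline{F^{*2}}(x)/\overline{F}(x)\le 2$.

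The matching lower bound is the standard long-tailed inequality: for any fixed $A>0$,
\[
  \overline{F^{*2}}(x) \;\ge\; 2\,\overline{F}(x+A)\,\P(\xi\ge -A) \;-\; \overline{F}(x+A)^{2},
\]
which, after dividing by $\overline{F}(x)$, using~\eqref{eq:lt}, and then letting $A\to\infty$, gives $\liminf_{x\to\infty}\overline{F^{*2}}(x)/\overline{F}(x)\ge 2$. Combining the two bounds shows $F^{+}$ is subexponential, and since $F^{+}$ and $G^{+}$ have identical tails, so is $G^{+}$, i.e.\ $G$ is subexponential. The only delicate step is the construction of $F$ in the first paragraph: the freedom to modify the left tail without affecting the right tail is what allows us to impose the negative-drift hypothesis~\eqref{eq:neg} required by Theorem~\ref{r:ms}, whereas $G\in\SK$ by itself only guarantees finiteness of $m_{G^+}$ on the positive side. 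Everything else is forced by the choice $\sigma\equiv 2$ and the sandwich above.
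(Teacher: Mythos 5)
Your proposal is correct and follows essentially the same route as the paper: apply Theorem~\ref{r:ms}(i) with $\sigma\equiv2$ to a negative-drift modification of $G$ whose membership in $\SK$ (and hence long-tailedness) is inherited from $G$, and sandwich the two-fold convolution tail between $\P(M_2>x)$ and the elementary inclusion--exclusion lower bound. The only difference is cosmetic: the paper takes $\xi_n=\phi_n-b$ with $\phi_n\sim G^+$ (using shift-invariance of $\SK$) rather than altering the left tail, which has the minor advantage that $\P(M_2>x-2b)$ directly dominates $\P(\phi_1+\phi_2>x)=\overline{(G^+)^{*2}}(x)$, whereas your argument controls the whole-line convolution $\overline{F^{*2}}$ and so implicitly needs the standard (easy, via long-tailedness) identification of $\overline{F^{*2}}(x)$ with $\overline{(F^+)^{*2}}(x)$ up to $(1+o(1))$ in order to match the paper's definition of subexponentiality on $\R$.
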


%\begin{changebar}
\begin{cor}
  \label{r:ind}\label{r:rempos}
  Suppose that, under the conditions of Theorem~\ref{r:ms}~(i), the
  stopping time~$\sigma$ is additionally independent of the
  sequence~$\{\xi_n\}$.  Then also
  \begin{equation*}
    \lim_{x\to\infty}\frac{\P(S_{\sigma}>x)}{\overline{F}(x)} = \E\sigma.
  \end{equation*}
\end{cor}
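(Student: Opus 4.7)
The plan is to establish matching upper and lower bounds on $\P(S_\sigma>x)/\overline F(x)$, exploiting the independence of $\sigma$ from $\{\xi_n\}$ together with the fact that $F\in\SK$ entails that $F$ is subexponential (Corollary~\ref{r:kl}).

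The upper bound is immediate from Theorem~\ref{r:ms}(i): on $\{\sigma<\infty\}$ we have $S_\sigma\le M_\sigma$, so $\P(S_\sigma>x)\le\P(M_\sigma>x)$, and therefore
\begin{equation*}
  \limsup_{x\to\infty}\frac{\P(S_\sigma>x)}{\overline F(x)}\le\E\sigma.
\end{equation*}

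For the lower bound I would condition on $\sigma$, using independence, to write
\begin{equation*}
  \P(S_\sigma>x)=\sum_{n=0}^\infty\P(\sigma=n)\,\P(S_n>x).
\end{equation*}
Since $F$ is subexponential, the classical convolution property gives $\P(S_n>x)\sim n\,\overline F(x)$ as $x\to\infty$ for each fixed $n$. Applying Fatou's lemma to the above sum (viewed as an integral against counting measure on $\{0,1,2,\dots\}$) then yields
\begin{equation*}
  \liminf_{x\to\infty}\frac{\P(S_\sigma>x)}{\overline F(x)}\ge\sum_{n=0}^\infty n\,\P(\sigma=n)=\E\sigma,
\end{equation*}
which combined with the upper bound gives the corollary. (In the case $\E\sigma=\infty$ the same argument, truncating at $N$ and letting $N\to\infty$, gives $\liminf=\infty$, matching the conclusion of Theorem~\ref{r:ms}(i).)

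There is no substantial obstacle in this argument; the only step requiring care is the interchange of limit and summation, and Fatou handles this cleanly, with no need for any integrability condition on $\sigma$ beyond what is already supplied by Theorem~\ref{r:ms}(i).
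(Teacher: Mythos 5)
Your proof is correct and is essentially the paper's own argument: the upper bound via $\P(S_\sigma>x)\le\P(M_\sigma>x)$ and Theorem~\ref{r:ms}~(i), and the lower bound by conditioning on the independent $\sigma$, using subexponentiality of $F$ (from Corollary~\ref{r:kl}) to get $\P(S_n>x)\sim n\overline{F}(x)$, and then a truncation/Fatou step. Your Fatou formulation is just a clean packaging of the paper's ``simple truncation argument.''
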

%\end{changebar}

%\begin{changebar}
\begin{rem}
  The results of both Theorem~\ref{r:ms}~(i) and Corollary~\ref{r:ind}
  continue to hold even when the condition~\eqref{eq:neg} is dropped,
  provided that the mean of the random variables~$\xi_n$
  remains finite and a further condition is imposed on the tail of the
  distribution of the stopping time~$\sigma$.  Since \eqref{eq:neg} is
  assumed throughout the body of the paper, we give the details as
  Corollary~\ref{r:pos} in the Appendix.
\end{rem}
%\end{changebar}

For several of our results and their proofs we require a
function~$h:\Rp\to\Rp$, which, since $F$ is long-tailed, may be chosen
such that
\begin{gather}
  h(x) \le x/2 \quad\text{for all $x$}, \label{eq:h1}\\
  h(x) \to \infty \quad\text{as $x\to\infty$}, \label{eq:h2}\\
  \frac{\overline{F}(x-h(x))}{\overline{F}(x)} \to 1 \quad\text{as
    $x\to\infty$}, \label{eq:h3}\\
  \text{there exists $x_0$ with $h(x+t)\le h(x)+t$ for all
    $x\ge{}x_0$, $t\ge0$}. \label{eq:h4}
\end{gather}

Now let $\pi$ be the distribution of $M$ ($=\sup_{n\ge0}S_n$).
Theorem~\ref{r:se3} below is in part due to Asmussen \textit{et al.}\
(2001), and provides a useful characterisation of the class~$\SK$.  In
particular, by taking the function~$g$ in the statements (a) and (b)
of the theorem to be the indicator function of the interval~$[0,c]$, we
obtain the equivalence of the condition~$F\in\SK$ to a local limit
result for $\pi((x,x+c])$ for \emph{any}, and hence for \emph{all},
$c>0$.

Let $\mathcal{G}$ be the class of functions on $\Rp$ which are
directly Riemann integrable (see, for example, Feller (1971, p.\ 362))
and nonnegative.  Let $\mathcal{G}^*$ be the subclass of $\mathcal{G}$
consisting of those functions which are additionally bounded away from
zero on some interval of nonzero width.

\begin{thm}\label{r:se3}
  For any function~$g:\Rp\to\R$ consider the property
  \begin{equation}
    \label{eq:gllt}
    \lim_{x\to\infty}\frac{1}{\overline{F}(x)}
    \int_0^\infty \pi(x+dt)g(t)
    = \frac{1}{m}\int_0^\infty g(t)\,dt.
  \end{equation}
  \begin{itemize}
  \item[(a)] If $F\in\SK$, then \eqref{eq:gllt} holds for all directly
    Riemann integrable functions~$g$ on $\Rp$.
  \item[(b)] If \eqref{eq:gllt} holds for any given
    $g\in\mathcal{G}^*$, then $F\in\SK$.
  \end{itemize}
\end{thm}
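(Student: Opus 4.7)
The plan is to establish both parts of Theorem~\ref{r:se3} by proving the equivalence of $F\in\SK$ with the \emph{local} limit
\[
\pi((x,x+c])\sim\frac{c}{m}\,\overline{F}(x) \quad \text{as } x\to\infty,
\]
holding for every (respectively, some) $c>0$. Once this equivalence is in hand, \eqref{eq:gllt} for general $g\in\mathcal{G}$ follows by sandwiching $g$ between Riemann-sum step functions and using the tail control built into direct Riemann integrability, while \eqref{eq:skdef} follows from the local limit by substituting into the Lindley equation for $\overline{\pi}$ and extracting the convolution asymptotic that appears.

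For part~(a), starting from $F\in\SK$ and the distributional fixed-point $W\stackrel{d}{=}(W+\xi)^+$ with $W\sim\pi$, the Lindley identity reads
\[
\pi((x,x+c])=\int_0^\infty F((x-y,x+c-y])\,\pi(dy).
\]
I would split this integral, using the cutoff~$h$ of \eqref{eq:h1}--\eqref{eq:h4}, into contributions from small $y\in[0,h(x)]$, middle $y\in(h(x),x-h(x)]$, a near-$x$ range $y\in(x-h(x),x+h(x)]$, and large $y>x+h(x)$. The small-$y$ part contributes $o(\overline{F}(x))$ by the uniform long-tailed estimate \eqref{eq:h3}, while the large-$y$ part is controlled by the left-tail integrability of $F$ (from $\E|\xi|<\infty$) combined with the Veraverbeke-type estimate $\overline{\pi}(y)\sim\overline{F^s}(y)/m$ (which holds since $F\in\SK$ forces $F^s$ to be subexponential). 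The middle range is where the $\SK$-condition \eqref{eq:skdef} enters decisively: after using Veraverbeke to replace the tail mass of $\pi$ by $\overline{F^s}(y)/m$, this middle integral reduces to a convolution-type expression whose leading behaviour is controlled by \eqref{eq:skdef} and shown to be $o(\overline{F}(x))$. The leading term $(c/m)\overline{F}(x)$ then emerges from the near-$x$ range, where $t=x-y$ is bounded so that $F((x-y,x+c-y])=F((-t,c-t])$ depends only on $t$, and the identity $\int_{-\infty}^\infty F((-t,c-t])\,dt=c$ combines with the local behaviour of $\pi$ (itself driven iteratively by $\SK$) to produce the stated asymptotic.

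For part~(b), the hypothesis \eqref{eq:gllt} with $g\in\mathcal{G}^*$ forces, via a pointwise lower bound $g\geq\delta\I_{[a,b]}$ on some interval, that $\pi((x+a,x+b])=O(\overline{F}(x))$; propagating this through the Lindley equation on intervals of variable length bootstraps to the full local limit for every $c>0$. Substituting this local limit back into the Lindley identity for $\overline{\pi}$ and isolating the convolution-type middle-range contribution then extracts \eqref{eq:skdef}. The main obstacle throughout is the middle-range analysis: it is exactly here that the $\SK$-condition does the critical work---sufficient in (a), necessary in (b)---while the remainder of the argument is careful but routine bookkeeping with the cutoff~$h$ and the long-tailed property of~$F$.
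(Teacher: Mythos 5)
There are genuine gaps in both halves. For part~(a), your outline is circular at the decisive point: the leading term is supposed to come from the near-$x$ range of the Lindley identity, but to evaluate $\int_{(x-h(x),\,x+h(x)]}\pi(dy)\,\P(\xi\in(x-y,x+c-y])$ you need precisely the local behaviour of $\pi$ on intervals of bounded length near level $x$ --- which is the statement being proved. You cannot get this from Veraverbeke's theorem: $\overline{\pi}(y)\sim\overline{F^s}(y)/m$ is a tail asymptotic, and the difference of two quantities each asymptotically equivalent to $\overline{F^s}(\cdot)/m$ gives no control on $\pi$ of an interval; the same objection applies to your treatment of the middle range, where you would need $\pi(dy)$ locally comparable to $\overline{F}(y)\,dy/m$. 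The phrase ``itself driven iteratively by $\SK$'' is where a fixed-point or bootstrap argument would have to live, and that is the entire content of the local limit theorem of Asmussen \textit{et al.}\ (2001); the paper does not reprove it but simply cites it as part~(a).

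For part~(b), the second half of your plan (once the exact local limit $\pi((x,x+c])\sim(c/m)\overline{F}(x)$ is available for all $c$, feed it back in, show the small-$y$, near-$x$ and far ranges exhaust $(c/m)\overline{F}(x)$, conclude the middle range is $o(\overline{F}(x))$ and convert it blockwise into $\int_{h(x)}^{x-h(x)}\overline{F}(t)\overline{F}(x-t)\,dt=o(\overline{F}(x))$, i.e.\ \eqref{eq:skdef}) is viable and genuinely different from the paper, which works instead with the ladder-height representation $M=_D\sum_{i=1}^{\nu}\psi_i$ and pins down $\lim_x\P(T_2\in(x,x+c])/\overline{F}(x)$ so that the ``both summands large'' event is negligible. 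The gap is in the first half: deriving the exact local limit from \eqref{eq:gllt} for a single $g_0\in\mathcal{G}^*$. Minorizing $g_0\ge\delta\I_{[a,b]}$ only yields $\limsup_x\pi((x+a,x+b])/\overline{F}(x)<\infty$, an upper bound with the wrong constant, and ``propagating through the Lindley equation on intervals of variable length'' is not an argument that recovers the constant $c/m$. What is missing is the matching renewal-theoretic \emph{lower} bound, $\liminf_x\pi((x,x+c])/\overline{F}(x)\ge c/m$ (and its analogues \eqref{eq:lt1}--\eqref{eq:lt3} for the ladder heights, Lemma~\ref{r:lt}), which the paper combines with \eqref{eq:gllt} for $g_0$ in a Fatou-type squeezing argument: the limit for the whole of $g_0$ plus sharp liminf bounds for each piece force the limit, with the correct constant, for each indicator piece, and then shifts and additivity give all $c>0$. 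Without that lower-bound ingredient your bootstrap does not close, so as written both (a) and (b) rest on the step that actually carries the difficulty.
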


\begin{proof}
  The result~(a) is Corollary~1 of Asmussen \textit{et al.}\ (2001).
  We prove (b).

  Note first that, for any $g\in\mathcal{G}$,
  \begin{equation}
    \label{eq:gllb}
    \liminf_{x\to\infty}\frac{1}{\overline{F}(x)}
    \int_0^\infty \pi(x+dt)g(t)
    \ge \frac{1}{m}\int_0^\infty g(t)\,dt.
  \end{equation}
  This follows routinely from the lower bound~\eqref{eq:lt3} on the
  distribution~$\pi$ of $M$ given by Lemma~\ref{r:lt} in the Appendix.
  (In particular we may initially take $g$ to be zero outside a finite
  interval, and then use Fatou's Lemma to obtain the general result.)

  Now suppose that \eqref{eq:gllt} holds with $g$ given by
  $g_0\in\mathcal{G}^*$.  Let $\I_A$ denote the indicator function of
  any $A\subset\Rp$.  Then, for all sufficiently small $c>0$, we can
  find $\varepsilon>0$, $b\ge0$, and $g_2\in\mathcal{G}$ such that
  $g_0\equiv g_1+g_2$ where
  $g_1\equiv\varepsilon\I_{[b,b+c]}\in\mathcal{G}^*$.  Now
  \begin{multline*}
    \limsup_{x\to\infty}\frac{1}{\overline{F}(x)}
    \int_0^\infty \pi(x+dt)g_1(t) \\
    \le
    \lim_{x\to\infty}\frac{1}{\overline{F}(x)}
    \int_0^\infty \pi(x+dt)g_0(t)
    -
    \liminf_{x\to\infty}\frac{1}{\overline{F}(x)}
    \int_0^\infty \pi(x+dt)g_2(t).
  \end{multline*}
  It therefore follows from \eqref{eq:gllt} with $g$ given by $g_0$
  and from \eqref{eq:gllb} with $g$ given by each of $g_1$ and $g_2$,
  that \eqref{eq:gllt} also holds with $g$ given by $g_1$.  Since $F$
  is long-tailed, the property \eqref{eq:gllt} is preserved under any
  finite shift of the function~$g$.  Thus, finally, we obtain that
  \eqref{eq:gllt} holds for all $g$ of the form $\I_{[0,c]}$ for all
  sufficiently small $c>0$, and so, by additivity, for all $c>0$.

  Now fix any $c>0$.  Let the sequences of random
  variables~$\{\psi_n\}_{n\ge1}$, $\{T_n\}_{n\ge1}$, the random
  variable~$\nu$ and the constant~$p$ be as defined in the Appendix.
  From \eqref{eq:gllt} with $g\equiv\I_{[0,c]}$, a variation of the
  argument at the end of the proof of Lemma~\ref{r:lt} gives
  \begin{align*}
    \frac{c}{m}
    & = \lim_{x\to\infty}\frac{\P(M\in(x,x+c])}{\overline{F}(x)} \\
    & \ge \P(\nu=2)\limsup_{x\to\infty}
    \frac{\P(T_2\in(x,x+c])}{\overline{F}(x)}
    + \liminf_{x\to\infty}\sum_{n\ge1,\,n\neq2} \P(\nu=n)
    \frac{\P(T_n\in(x,x+c])}{\overline{F}(x)}\\
    & \ge \P(\nu=2)\limsup_{x\to\infty}
    \frac{\P(T_2\in(x,x+c])}{\overline{F}(x)}
    + \sum_{n\ge1,\,n\neq2} \P(\nu=n)
    \liminf_{x\to\infty}\frac{\P(T_n\in(x,x+c])}{\overline{F}(x)},
  \end{align*}
  where the last line above follows from Fatou's lemma Thus, from the
  lower bounds given by \eqref{eq:lt1} and \eqref{eq:lt2} in the
  Appendix (and the calculation leading to \eqref{eq:lt3}),
  \begin{displaymath}
    \limsup_{x\to\infty}\frac{\P(T_2\in(x,x+c])}{\overline{F}(x)}
    \le \frac{2pc}{(1-p)m},
  \end{displaymath}
  and hence, again using \eqref{eq:lt2},
  \begin{equation}
    \label{eq:t2lim}
    \lim_{x\to\infty}\frac{\P(T_2\in(x,x+c])}{\overline{F}(x)}
    = \frac{2pc}{(1-p)m},
  \end{equation}
  (where, as usual, the above equation includes the assertion that the
  limit exists).  Now, for the function~$h$ defined above (in fact we
  do not require the condition~\eqref{eq:h4}) for this proof), it
  follows from \eqref{eq:h1} that
  \begin{displaymath}
    \P(\psi_1+\psi_2\in(x,x+c],\,\psi_1 \le h(x),\,\psi_2 \le h(x)) = 0.
  \end{displaymath}
  Thus, from \eqref{eq:h2}, \eqref{eq:h3} and \eqref{eq:lt1},
  \begin{displaymath}
    \lim_{x\to\infty}\frac{1}{\overline{F}(x)}
    \P(\psi_1+\psi_2\in(x,x+c],\,\psi_i\le h(x))
    = \frac{pc}{(1-p)m}, \qquad i=1,2,
  \end{displaymath}
  and so, from \eqref{eq:t2lim},
  \begin{equation}\label{eq:small}
    \P(\psi_1+\psi_2\in(x,x+c],\,\psi_1>h(x),\,\psi_2>h(x))
    = o(\overline{F}(x)), \qquad\text{as $x\to\infty$}.
  \end{equation}
  Now it is convenient here to take $h$ such that $x-2h(x)$ is an
  integer multiple $n(x)$ of $c$ for all $x$.  Let also $d=p/(1-p)m$.
  Then, as $x\to\infty$,
  \begin{align}
    \P(\psi_1+\psi_2\in(x,x+c],\,\psi_1>h(x),\,\psi_2>h(x))
    & \ge
    \int_{h(x)}^{x-h(x)} \P(\psi_1 \in dt)
    \P(\psi_2 \in (x-t,x-t+c]) \nonumber\\
    & \sim
    dc \int_{h(x)}^{x-h(x)}
    \P(\psi_1 \in dt ) \overline{F}(x-t) \label{eq:flt}\\
    & =
    dc \sum_{k=1}^{n(x)} \int_{h(x)+(k-1)c}^{h(x) + kc}
    \P(\psi_1 \in dt ) \overline{F}(x-t) \nonumber\\
    & =
    (1+o(1)) d^2c \int_{h(x)}^{x-h(x)} \overline{F}(t) \overline{F}(x-t)\, dt,
    \label{eq:flt2}
  \end{align}
  where \eqref{eq:flt} follows from \eqref{eq:lt1}, and
  \eqref{eq:flt2} follows from \eqref{eq:lt1} and \eqref{eq:lt}.
  Thus, using also \eqref{eq:small}, we obtain
  \begin{equation}\label{eq:small2}
    \int_{h(x)}^{x-h(x)} \overline{F}(t) \overline{F}(x-t)\, dt
    = o(\overline{F}(x)), \qquad\text{as $x\to\infty$}.
  \end{equation}
  From \eqref{eq:h3},
  \begin{math}
    \overline{F}(x-t) = (1+o(1))\overline{F}(x)
  \end{math}
  as $x\to\infty$, uniformly in $t\in[0,h(x)]$.  Hence, from
  \eqref{eq:h2},
  \begin{displaymath}
    \int_0^{h(x)} \overline{F}(t) \overline{F}(x-t)\, dt
    \sim \overline{F}(x)\int_0^{h(x)} \overline{F}(t)\, dt
    \sim m_{F^+}\overline{F}(x), \qquad\text{as $x\to\infty$},
  \end{displaymath}
  and so the condition~\eqref{eq:small2} is equivalent to the
  condition~\eqref{eq:skdef} that $F\in\SK$.
\end{proof}

\begin{rem}
  Note that a trivial extension of the above proof gives directly
  (i.e.\ without reference to part (a) of the theorem) the result that
  if \eqref{eq:gllt} holds for any given $g\in\mathcal{G}^*$ then it
  does so for all directly Riemann integrable functions~$g$.
\end{rem}

For any $x\ge0$, define the stopping time
\begin{displaymath}
  \mu(x) = \min\{n: S_n>x\}.
\end{displaymath}
(Thus $\{\mu(x)\le n\}=\{M_n>x\}$.)  For any stopping time~$\sigma$
and any $x\ge0$, define
\begin{align*}
  A_{\sigma,1}(x) & = \{\mu(x)\le\sigma,\, S_{\mu(x)-1}\le h(x)\}\\
  A_{\sigma,2}(x) & = \{\mu(x)\le\sigma,\, S_{\mu(x)-1} > h(x)\},
\end{align*}
where $h$ is as given by \eqref{eq:h1}--\eqref{eq:h4}.  Define also
\begin{displaymath}
  \delta_\sigma(x) = \sup_{y\geq x}
  \frac{\P(A_{\sigma,2}(y))}{\overline{F}(y)}.
\end{displaymath}

Lemma~\ref{r:as1} below is, in an obvious sense, very close to what we
require for the proof of Theorem~\ref{r:ms}.  The loose ends are tied
up by Lemmas~\ref{r:deltatau}, \ref{r:tausigma} and \ref{r:dsms},
while Lemma~\ref{r:d} is necessary for the proof of
Lemma~\ref{r:deltatau}.

\begin{lem}
  \label{r:as1}
  Let $\sigma$ be a stopping time such that $\E\sigma<\infty$.  Then
  \begin{displaymath}
    \lim_{x\to\infty}\frac{\P(A_{\sigma,1}(x))}{\overline{F}(x)}
    = \E\sigma.
  \end{displaymath}
\end{lem}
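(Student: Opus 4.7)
The plan is to decompose $\P(A_{\sigma,1}(x))$ by the value of $\mu(x)$ and exploit the fact that on this event the crossing of level $x$ must be achieved by a single large increment $\xi_{\mu(x)} > x - S_{\mu(x)-1}$ from a position $S_{\mu(x)-1}\le h(x)$. For $n\ge1$, let $B_n(x) = \{ S_i\le x \text{ for } 0\le i\le n-2,\, S_{n-1}\le h(x),\, \sigma\ge n \}$; since $\{\sigma\ge n\} = \{\sigma\le n-1\}^c \in \mathcal{F}_{n-1}$, we have $B_n(x)\in\mathcal{F}_{n-1}$. Using $h(x)\le x/2 < x$ (so $S_{n-1}\le x$ is subsumed), together with the independence of $\xi_n$ from $\mathcal{F}_{n-1}$, I obtain
\begin{equation*}
\P(A_{\sigma,1}(x)) = \sum_{n=1}^\infty \E\bigl[\I_{B_n(x)}\,\overline{F}(x - S_{n-1})\bigr].
\end{equation*}

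For the upper bound, on $B_n(x)$ we have $\overline{F}(x-S_{n-1}) \le \overline{F}(x-h(x))$, so
\begin{equation*}
\frac{\P(A_{\sigma,1}(x))}{\overline{F}(x)} \le \frac{\overline{F}(x-h(x))}{\overline{F}(x)}\sum_{n=1}^\infty \P(B_n(x)).
\end{equation*}
Since $h(x)\to\infty$ by \eqref{eq:h2} and each $S_k$ is a.s.\ finite, $\I_{B_n(x)}\to\I_{\{\sigma\ge n\}}$ a.s., giving $\P(B_n(x))\to\P(\sigma\ge n)$ for each $n$. The uniform bound $\P(B_n(x))\le\P(\sigma\ge n)$ and the identity $\sum_n\P(\sigma\ge n)=\E\sigma<\infty$ then justify, by dominated convergence on the counting measure, that $\sum_n\P(B_n(x))\to\E\sigma$. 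Combined with \eqref{eq:h3}, this yields $\limsup_{x\to\infty} \P(A_{\sigma,1}(x))/\overline{F}(x) \le \E\sigma$.

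For the matching lower bound I would use Fatou's lemma twice. For each fixed $n$, \eqref{eq:lt} gives $\overline{F}(x-s)/\overline{F}(x)\to1$ for every $s\in\R$, so $\I_{B_n(x)}\overline{F}(x-S_{n-1})/\overline{F}(x)\to\I_{\{\sigma\ge n\}}$ almost surely as $x\to\infty$, whence
\begin{equation*}
\liminf_{x\to\infty}\frac{\E\bigl[\I_{B_n(x)}\,\overline{F}(x-S_{n-1})\bigr]}{\overline{F}(x)} \ge \P(\sigma\ge n).
\end{equation*}
A second Fatou, applied to the sum over $n\ge1$ of nonnegative terms, then produces $\liminf_x \P(A_{\sigma,1}(x))/\overline{F}(x) \ge \sum_{n\ge1}\P(\sigma\ge n) = \E\sigma$, completing the proof.

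The main subtlety sits in the upper bound: to interchange the limit in $x$ with the infinite sum over $n$ one needs a summable dominator uniform in $x$, and the hypothesis $\E\sigma<\infty$ delivers this precisely through the summability of $\P(\sigma\ge n)$. The lower bound, by contrast, is handled by two essentially free applications of Fatou and does not require any further integrability input.
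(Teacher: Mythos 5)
Your proof is correct and follows essentially the same route as the paper: decompose over the first passage time $\mu(x)$, observe that on $A_{\sigma,1}(x)$ the level $x$ is crossed by a single jump from a position below $h(x)$, and use \eqref{eq:h2}--\eqref{eq:h3} to replace $\overline{F}(x-S_{n-1})$ by $\overline{F}(x)$. The only (immaterial) difference is in the lower bound, where the paper truncates the sum at a finite $N$, restricts to $\{M_n\le h(x),\,S_n\in[-h(x),h(x)]\}$ and lets $N\to\infty$, whereas you reach the same conclusion by two applications of Fatou's lemma.
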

\begin{proof}
For any $x\ge0$,
\begin{align*}
  \P(A_{\sigma,1}(x))
  & = \sum_{n=0}^{\infty}\int_{-\infty}^{h(x)}
  \P(\sigma >n, M_n\leq x, S_n \in dy, S_{n+1} > x) \\
  & \leq \sum_{n=0}^{\infty}\int_{-\infty}^{h(x)}
  \P(\sigma >n, M_n\leq x, S_n \in dy) \overline{F}(x-h(x)) \\
  & \leq \sum_{n=0}^{\infty} \P(\sigma >n) \overline{F}(x-h(x)) \\
  & =  (1+o(1))\E\sigma \overline{F}(x)
  \quad\text{as $x\to\infty$}.
\end{align*}
We now establish the lower bound.  For any positive integer~$N$,
\begin{align*}
  \P(A_{\sigma,1}(x))
  & \geq \sum_{n=0}^N \int_{-h(x)}^{h(x)}
  \P(\sigma >n, M_n\leq h(x), S_n \in dy, S_{n+1} >x) \\
  & \geq \sum_{n=0}^N
  \P(\sigma >n, M_n\leq h(x), S_n \in [-h(x),h(x)]) \overline{F}(x+h(x)) \\
  & = (1+o(1)) \overline{F}(x) \sum_{n=0}^N \P(\sigma >n)
  \quad\text{as $x\to\infty$},
\end{align*}
by \eqref{eq:h2} and \eqref{eq:h3}.  Let $N\to\infty$ to obtain
\begin{displaymath}
  \P(A_{\sigma,1}(x)) \geq (1+o(1))\E\sigma \overline{F}(x)
  \quad\text{as $x\to\infty$}.
\end{displaymath}
\end{proof}

Now define
\begin{displaymath}
  m^- = \int_0^\infty F(-y)\,dy.
\end{displaymath}
For any $t,x>0$, define
\begin{displaymath}
  \mathcal{D}(-t,-x)
  = \E(\#\{n\ge0: S_{n}>-t,\,S_{n+1}\le -t,\,\min_{0\le k\le n}S_k>-t-x\})
\end{displaymath}
to be the expected number of downcrossings by $\{S_n\}$ of $-t$ before
the random walk first reaches $-t-x$.  Define also
$\mathcal{D}(-t)=\mathcal{D}(-t,-\infty)$.

\begin{lem}
  \label{r:d}
  \begin{displaymath}
    \lim_{t,x\to\infty}\mathcal{D}(-t,-x) = \frac{m^-}{m}.
  \end{displaymath}
\end{lem}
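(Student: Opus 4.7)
The plan is to decouple the two limits by writing $\mathcal{D}(-t,-x)=\mathcal{D}(-t)-R(t,x)$, where $\mathcal{D}(-t):=\mathcal{D}(-t,-\infty)$ is the unconstrained expected number of downcrossings of $-t$ and $R(t,x)$ is the remainder. The problem then reduces to showing (i) $\lim_{t\to\infty}\mathcal{D}(-t)=m^-/m$, and (ii) $R(t,x)\to 0$ as $x\to\infty$ uniformly in $t$.

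For (i), I condition on $S_n$ and use independence of $\xi_{n+1}$ from $\mathcal{F}_n$ to obtain
\[
  \mathcal{D}(-t) = \sum_{n\ge 0}\P(S_n>-t,\,\xi_{n+1}\le -t-S_n)
  = \int_0^\infty F(-z)\,U_t(dz),
\]
where $U_t(A):=\sum_{n\ge 0}\P(S_n+t\in A)$ is the translated occupation measure of the walk. Since $\E\xi_1=-m<0$, Blackwell's renewal theorem for random walks delivers $U_t\to m^{-1}\,dz$ weakly on $(0,\infty)$ as $t\to\infty$, together with a uniform bound $U_t([k,k+1])\le C$ in $t\ge 0$ and $k\ge 0$. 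Because $F(-\cdot)\le 1$ and $\int_0^\infty F(-z)\,dz=m^-<\infty$, dominated convergence gives $\mathcal{D}(-t)\to m^-/m$.

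For (ii), let $\kappa:=\min\{n\ge 0: S_n\le -t-x\}$; since the walk drifts to $-\infty$, $\kappa<\infty$ a.s. By the strong Markov property at $\kappa$ applied to the shifted walk $(S_{\kappa+j}-S_\kappa)_{j\ge 0}$ and by translation invariance,
\[
  R(t,x) = \E\bigl[\widetilde{\mathcal{D}}(x+Y)\bigr],
\]
where $Y:=-S_\kappa-t-x\ge 0$ is the overshoot and $\widetilde{\mathcal{D}}(u)$ denotes the expected number of downcrossings of a positive level $u$ by a walk started at $0$. Any downcrossing of $u>0$ must be preceded by an upcrossing, and the probability of exceeding $u$ from any starting point in $(-\infty,u]$ is at most $p:=\P(M>0)<1$; a geometric decomposition of successive upcrossings therefore yields $\widetilde{\mathcal{D}}(u)\le \P(M>u)/(1-p)$. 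Monotonicity of $\P(M>\cdot)$ then gives $\widetilde{\mathcal{D}}(x+Y)\le \P(M>x)/(1-p)\to 0$ as $x\to\infty$, independently of $t$, which establishes (ii).

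Combining (i) and (ii) gives the joint limit $m^-/m$. The principal technical step is the appeal to Blackwell's renewal theorem for general random walks, together with the accompanying uniform bound on the occupation measure of $\{S_n+t\}$ on unit intervals; this is exactly the sort of renewal-theoretic input that the paper defers to its Appendix, and the extraction of $1/m$ as the asymptotic density is what produces the denominator. The strong-Markov-plus-overshoot decomposition used to control $R(t,x)$ is then routine once the elementary bound $\widetilde{\mathcal{D}}(u)\le\P(M>u)/(1-p)$ has been identified.
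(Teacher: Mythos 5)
Your proof is correct and follows essentially the same route as the paper: establish $\lim_{t\to\infty}\mathcal{D}(-t)=m^-/m$ by writing $\mathcal{D}(-t)=\int_0^\infty F(-z)\,R(dz-t)$ and invoking the renewal theorem, then control the discrepancy $\mathcal{D}(-t)-\mathcal{D}(-t,-x)$ via the strong Markov property at the first passage below $-t-x$, bounding it by a constant times $\P(M>x)\to0$. The only (immaterial) difference is the constant: the paper uses $K=\sup_{t>0}\mathcal{D}(-t)<\infty$, whereas you use the geometric upcrossing bound $\P(M>u)/(1-\P(M>0))$.
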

\begin{proof}
  Asmussen (1998, Lemma~2.4) shows that
  \begin{equation}
    \label{eq:dlim}
    \lim_{t\to\infty}\mathcal{D}(-t) = \frac{m^-}{m}.
  \end{equation}
  For completeness we repeat his proof: let $R$ given by
  $R(A)=\sum_{n=0}^\infty\P(S_n\in A)$ be the renewal measure
  associated with the random walk~$\{S_n\}$.  Then there exist
  constants $a_1$ and $a_2$ such that $R[y,y+x] \le a_1+a_2 x$ for all
  $x\ge0$ and for all $y$.  We have
  \begin{displaymath}
    \mathcal{D}(-t) = \int_{-t}^\infty R(dy)F(-t-y)
    = \int_0^\infty R(dz-t)F(-z).
  \end{displaymath}
  When $F$ is nonlattice, $R(dz-t)$ converges vaguely to Lebesgue
  measure with density~$1/m$ as $t\to\infty$.  It follows that
  \begin{displaymath}
    \lim_{t\to\infty}\mathcal{D}(-t)
    = \frac{1}{m}\int_0^\infty F(-z)\,dz
    = \frac{m^-}{m}.
  \end{displaymath}
  The usual straightforward modifications are required to establish
  also \eqref{eq:dlim} in the lattice case.

  Now further, there exists $K<\infty$ with
  \begin{equation}\label{eq:k}
    \sup_{t>0}\mathcal{D}(-t) = K.
  \end{equation}
  Thus, by the strong Markov property,
  \begin{displaymath}
    \mathcal{D}(-t,-x) \le \mathcal{D}(-t) \le \mathcal{D}(-t,-x) + K\P(M>x)
  \end{displaymath}
  and so the required result follows from \eqref{eq:dlim} and since
  $\P(M>x)\to0$ as $x\to\infty$.
\end{proof}

\begin{lem}\label{r:deltatau}
  $\lim_{x\to\infty}\delta_\tau(x) = 0$ if and only if $F\in\SK$.
\end{lem}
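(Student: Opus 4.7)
The plan is to compare $\P(A_{\tau,2}(x))$ with the convolution
\begin{displaymath}
  I(x) := \int_{h(x)}^{x-h(x)} \overline{F}(t)\,\overline{F}(x-t)\,dt,
\end{displaymath}
which by equation~\eqref{eq:small2} (established in the proof of Theorem~\ref{r:se3}(b)) satisfies $I(x) = o(\overline{F}(x))$ if and only if $F\in\SK$. Since $\delta_\tau(x)\to 0$ is equivalent to $\P(A_{\tau,2}(x))/\overline{F}(x)\to 0$ (a direct consequence of its definition as a supremum over $y\ge x$), it suffices to show that $\P(A_{\tau,2}(x))$ and $I(x)$ are of the same order modulo an $o(\overline{F}(x))$ error.

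First I would give an integral representation by decomposing $A_{\tau,2}(x)$ according to the value $\mu(x)=n$ and conditioning on the penultimate position $S_{n-1}=y\in(h(x),x]$. This gives
\begin{displaymath}
  \P(A_{\tau,2}(x)) = \int_{h(x)}^{x} U_x(dy)\,\overline{F}(x-y),
\end{displaymath}
where $U_x(dy) := \sum_{n\ge 1}\P(0 < S_i \le x\text{ for }1\le i\le n,\, S_n\in dy)$ is the expected occupation measure of the walk in $(0,x]$, killed upon exiting. Note that $n=1$ contributes nothing since $S_0=0\le h(x)$ for large $x$.

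The crux of the argument is to establish the two-sided asymptotic
\begin{displaymath}
  U_x[y, y+c] = (d+o(1))\, c\, \overline{F}(y)\quad\text{as $y\to\infty$},
\end{displaymath}
uniformly in $y\in(h(x), x-h(x)]$ and $c$ in any bounded range, where $d>0$ is a constant related to $m^-/m$. This expresses the ``single-big-jump'' principle for the excursion: the walk reaches height $y\gg 0$ only via one large increment, an event of probability of order $\overline{F}(y)$, after which it occupies a neighbourhood of $y$ for a bounded expected time before drifting down, possibly returning a geometric number of times (these returns being controlled by the descending-ladder structure identified in Lemma~\ref{r:d} via time reversal of the excursion path).

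Substituting this estimate into the representation, and noting that the boundary term satisfies $\int_{x-h(x)}^{x} U_x(dy)\,\overline{F}(x-y) = o(\overline{F}(x))$ (because $U_x((x-h(x), x]) = o(\overline{F}(x))$ by the LT property combined with the single-big-jump estimate), we obtain
\begin{displaymath}
  \P(A_{\tau,2}(x)) = (d+o(1))\, I(x) + o(\overline{F}(x)),
\end{displaymath}
and the lemma follows by invoking \eqref{eq:small2} in both directions. The main obstacle is the two-sided estimate on $U_x[y, y+c]$: proving it rigorously requires combining Lemma~\ref{r:d} with a careful time-reversal argument to identify the expected occupation at high levels with a downcrossing count for the reversed walk, and then isolating the single large increment responsible for the walk's arrival at height~$y$.
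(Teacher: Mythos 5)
Your reduction of the lemma to $\P(A_{\tau,2}(x))=o(\overline{F}(x))$ and your representation $\P(A_{\tau,2}(x))=\int_{(h(x),x]}U_x(dy)\,\overline{F}(x-y)$ are both correct, and the heuristic (two big jumps versus the middle-range convolution $I(x)$) is the right intuition. But the entire difficulty of the lemma has been transferred to the ``crux'' estimate $U_x[y,y+c]=(d+o(1))\,c\,\overline{F}(y)$ uniformly in $y\in(h(x),x-h(x)]$, which you do not prove and which cannot be proved in the form you need. Since you invoke \eqref{eq:small2} ``in both directions'', this two-sided estimate must hold \emph{without} assuming $F\in\SK$. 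It cannot: by the cycle (ergodic) identity $\E\#\{n<\tau:\,S_n\in(y,y+c]\}=\E\tau\,\pi((y,y+c])$ --- the same identity the paper uses for downcrossings --- an unconditional two-sided local asymptotic of this kind is essentially the statement \eqref{eq:gllt} for $g=\I_{[0,c]}$, and by Theorem~\ref{r:se3}(b) that already implies $F\in\SK$; as there are long-tailed $F$ with negative mean outside $\SK$, no unconditional proof exists, and using the estimate in the converse direction of the lemma is circular. Moreover, even under $F\in\SK$ the estimate as stated is false: the killing at level $x$ removes exactly those visits to $(y,y+c]$ that occur after an incoming jump overshoots $x$, and for regularly varying tails with $y$ comparable to $x$ this reduces the occupation by a factor of roughly $1-\overline{F}(x)/\overline{F}(y)$, bounded away from one, so no single constant $d$ works uniformly over the range.

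The boundary claim $U_x((x-h(x),x])=o(\overline{F}(x))$ ``by the LT property'' is also unjustified: a direct single-jump computation gives a contribution of order $h(x)\,[\overline{F}(x-h(x))-\overline{F}(x)]$, and \eqref{eq:h1}--\eqref{eq:h4} alone do not make this $o(\overline{F}(x))$ (the paper has to engineer exactly such a product to be small by a special choice of $h$ in the proof of Lemma~\ref{r:lt}), while the multi-jump contributions to that layer circle back to the very $\SK$ question being decided. The paper avoids all local estimates for the killed occupation measure: it counts the downcrossings $N^-(x)$ of level $x$ before $\tau$, uses the identity $\E N^-(x)=\E\tau\int_0^\infty\pi(x+dt)F(-t)$ together with Theorem~\ref{r:se3} applied to $g(t)=F(-t)\in\mathcal{G}^*$ to get ``$\E N^-(x)\sim(m^-/m)\E\tau\,\overline{F}(x)$ iff $F\in\SK$'', shows unconditionally via Lemmas~\ref{r:as1} and \ref{r:d} that the part of $\E N^-(x)$ carried by $A_{\tau,1}(x)$ has exactly this asymptotics, and then converts the remainder into $\P(A_{\tau,2}(x))$ through the bounds $1\le\E(N^-(x)\mid A_{\tau,2}(x))\le K$. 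To salvage your route you would need, at a minimum, an unconditional lower bound of the form $U_x[y,y+c]\ge c_1c\,(\overline{F}(y)-\overline{F}(x))$, an upper bound under $F\in\SK$ obtained from Theorem~\ref{r:se3}(a) via the cycle identity, and a genuine treatment of the boundary layer --- that is, most of the work remains to be done.
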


\begin{proof}
  The proof is in part an extended, and somewhat clarified, version
  of the argument of Asmussen (1998).
  Define the reflected random walk (workload process or Lindley
  queueing theory recursion) $\{W_n\}_{n\ge0}$ by
  \begin{displaymath}
    W_0=0, \qquad W_n = \max(0,W_{n-1}+\xi_{n}), \quad n\ge 1.
  \end{displaymath}
  Note that, from \eqref{eq:neg}, this is an ergodic Markov chain.
  Note also that $W_n=S_n$ for $n<\tau$ (where $\tau$ is as defined by
  \eqref{eq:taudef}).  It is further convenient to extend the sequence
  $\{\xi_n\}_{n\ge1}$ to the doubly-infinite sequence
  $\{\xi_n\}_{-\infty<n<\infty}$ of independent identically
  distributed random variables with distribution function~$F$, and to
  define also the stationary version~$\{W^n\}_{-\infty<n<\infty}$ of
  the above workload process (indexed on $(-\infty,\infty)$) by
  \begin{equation}
    \label{eq:wsdef}
    W^n = \max(0,\,\sup_{j\geq 0} \sum_{i=0}^{-j} \xi_{n+i})
  \end{equation}
  Note that $W^n=\max(0,W^{n-1}+\xi_{n})$ for all $n$.  It is well
  known that the common distribution of the $W^n$ is given by $\pi$
  (as is clear from \eqref{eq:wsdef}).

  For any $x>0$, let
  \begin{displaymath}
    N^-(x) = \#\{n: 1\le n<\tau,\, S_n>x,\,S_{n+1}\le x\}
  \end{displaymath}
  be the number of downcrossings of $x$ in $[0,\tau]$ by $\{S_n\}$ or
  $\{W_n\}$.
  Note that, by the ergodicity of the process~$\{W^n\}$,
  \begin{align*}
    \frac{\E N^-(x)}{\E \tau}
    & = \P(W^0>x,\,W^0+\xi_1\le x)\\
    & = \int_0^\infty\pi(x+dt)F(-t).
  \end{align*}
  From \eqref{eq:neg}, the function~$g$ on $\Rp$ defined by
  $g(t)=F(-t)$ belongs to $\mathcal{G}^*$.  Hence, by
  Theorem~\ref{r:se3},
  \begin{equation}
    \label{eq:en}
    \E N^-(x) \sim \frac{m^-}{m}
    \E\tau \overline{F}(x)
    \text{ as $x\to\infty$ \quad if and only if \quad}
    F\in\SK.
  \end{equation}
  We also have
  \begin{align*}
    \E[N^-(x)\I(A_{\tau,1}(x))]
    & =
    \E[\I(A_{\tau,1}(x)) \E\{ N^-(x) ~|~ S_{\mu(x)}\}] \\
    & =
    \E[\I(A_{\tau,1}(x)) \mathcal{D}(-(S_{\mu(x)}-x), -x)].
  \end{align*}
  Since, for any $u\in (0,h(x))$,
  \begin{align*}
    \P(S_{\mu (x)}-x > h(x) ~|~ A_{\tau ,1}(x), S_{\mu (x) -1} \in du )
    & \geq \frac{\overline{F}(x+h(x))}{\overline{F}(x-h(x))} \\
    & \to 1, \quad\text{ as $x\to\infty$},
  \end{align*}
  by \eqref{eq:h3}, the overshoot $S_{\mu (x)} -x$ converges in
  distribution to $\infty$ as $x\to\infty$. Hence, again as
  $x\to\infty$,
  \begin{align}
    \E[N^-(x)\I(A_{\tau,1}(x))]
    & \sim \P(A_{\tau,1}(x))\frac{m^-}{m} \nonumber\\
    & \sim \frac{m^-}{m}\E\tau\overline{F}(x). \label{eq:en1}
  \end{align}
  Here, the first line above follows by Lemma~\ref{r:d} (and the
  spatial homogeneity of the random walk~$\{S_n\}$) and the second
  follows from Lemma~\ref{r:as1}.  It now follows from \eqref{eq:en}
  and \eqref{eq:en1} that
  \begin{displaymath}
    \E[N^-(x)\I(A_{\tau,2}(x))]
    = o(\overline{F}(x))
    \text{ as $x\to\infty$ \quad if and only if \quad}
    F\in\SK.
  \end{displaymath}

  Since also $1 \le \E(N^-(x)\,|\,A_{\tau,2}(x)) \le K$, where $K$ is
  as defined by \eqref{eq:k}, the required result now follows.
\end{proof}

\begin{lem}\label{r:tausigma}
  ~

  (i) Suppose that $\lim_{x\to\infty}\delta_\tau(x)=0$.  Let $\sigma$
  be any stopping time such that $\E\sigma<\infty$.  Then
  $\lim_{x\to\infty}\delta_\sigma(x)=0$.

  (ii) Suppose that there exists a stopping time~$\sigma$ such that
  $\P(\sigma>0)>0$, $\P(S_\sigma\le 0)=1$ and
  $\lim_{x\to\infty}\delta_\sigma(x)=0$.  Then
  $\lim_{x\to\infty}\delta_\tau(x)=0$.
\end{lem}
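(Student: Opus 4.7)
The plan is to address the two parts by (i) decomposing the walk into i.i.d.\ regenerative cycles ending at successive weak descending ladder epochs, and (ii) using a direct monotone embedding that exploits the hypothesis $\P(S_\sigma\le 0)=1$.

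For part (i), I would let $T_0=0$ and $T_k=\min\{n>T_{k-1}:S_n\le S_{T_{k-1}}\}$ for $k\ge 1$, so that the increments $T_k-T_{k-1}$ are i.i.d.\ copies of $\tau$ and $S_{T_k}\le 0$ for all $k$. On $A_{\sigma,2}(y)$ the upcrossing time $\mu(y)$ lies in some cycle $(T_{k-1},T_k]$, and $\mu(y)\le\sigma$ forces $T_{k-1}<\sigma$. Since $\sigma$ and $T_{k-1}$ are both stopping times, $\{T_{k-1}<\sigma\}\in\mathcal{F}_{T_{k-1}}$, so the strong Markov property splits the cycle-$k$ contribution off as an independent fresh copy of the walk starting from $S_{T_{k-1}}$. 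Written for the shifted walk $\widetilde{S}_j=S_{T_{k-1}+j}-S_{T_{k-1}}$, the cycle-$k$ event requires $\widetilde{S}$ to exceed level $y-S_{T_{k-1}}$ before returning to the origin, with the previous value above $h(y)-S_{T_{k-1}}$; property \eqref{eq:h4} gives $h(y-S_{T_{k-1}})\le h(y)-S_{T_{k-1}}$ for all sufficiently large $y$, so this embeds into $A_{\tau,2}(y-S_{T_{k-1}})$ for the fresh walk, whose probability is at most $\delta_\tau(x)\overline{F}(y-S_{T_{k-1}})\le\delta_\tau(x)\overline{F}(y)$ when $y\ge x$ (using monotonicity of $\overline{F}$ and $S_{T_{k-1}}\le 0$). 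Summing over $k$ gives
\begin{displaymath}
  \P(A_{\sigma,2}(y))\le\delta_\tau(x)\overline{F}(y)\sum_{k\ge 1}\P(T_{k-1}<\sigma),
\end{displaymath}
and since $T_{k-1}\ge k-1$, the sum is at most $\E\sigma<\infty$. Taking the supremum over $y\ge x$ yields $\delta_\sigma(x)\le\E\sigma\cdot\delta_\tau(x)\to 0$.

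For part (ii), the hypothesis $\P(S_\sigma\le 0)=1$ implies $\tau\le\sigma$ on $\{\sigma\ge 1\}$, since $\tau$ is the first index $n\ge 1$ with $S_n\le 0$. Hence $A_{\tau,2}(y)\cap\{\sigma\ge 1\}\subseteq A_{\sigma,2}(y)$. The filtration assumption that $\xi_{n+1}$ is independent of $\mathcal{F}_n$ for every $n$ makes $\mathcal{F}_0$ independent of the increments $\{\xi_n\}_{n\ge 1}$, so $\{\sigma\ge 1\}\in\mathcal{F}_0$ is independent of $A_{\tau,2}(y)$, which depends only on $\{\xi_n\}_{n\ge 1}$. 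This delivers
\begin{displaymath}
  \P(\sigma>0)\,\P(A_{\tau,2}(y))\le\P(A_{\sigma,2}(y)),
\end{displaymath}
and hence $\delta_\tau(x)\le\delta_\sigma(x)/\P(\sigma>0)\to 0$.

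The main obstacle is the embedding step in part (i): one must verify that the cycle-$k$ event, stated in terms of the original threshold $h(y)$, really translates into an $A_{\tau,2}$-event at the shifted level $y-S_{T_{k-1}}$ for the fresh walk, which is precisely where the subadditivity-type condition \eqref{eq:h4} on $h$ is used. The rest is a routine strong Markov bookkeeping, together with the elementary observation $T_{k-1}\ge k-1$ needed to recover a sum bounded by $\E\sigma$.
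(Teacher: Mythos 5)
Your argument is correct and follows essentially the same route as the paper: part (i) is the same decomposition over the descending ladder epochs, with \eqref{eq:h4} and the strong Markov property giving the per-cycle bound $\delta_\tau(x)\overline{F}(y)$ and the estimate $\sum_{k}\P(\sigma>\tau_k)\le\sum_k\P(\sigma>k)=\E\sigma$ closing the sum. Part (ii) is likewise the paper's inclusion $\tau\le\sigma$ on $\{\sigma>0\}$; your observation that $\{\sigma>0\}\in\mathcal{F}_0$ is independent of $\{\xi_n\}_{n\ge1}$ merely makes explicit the paper's ``condition on $\{\sigma>0\}$'' step.
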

\begin{proof}
  To prove (i), note first that it follows from \eqref{eq:h4} (where
  $x_0$ is as defined there) that, for all $x\geq x_0$ and for all
  $t\ge0$,
  \begin{align}
    \P(\mu(x+t)\le\tau, \,S_{\mu(x+t)-1}>h(x)+t)
    & \le \P(\mu(x+t)\le\tau, \,S_{\mu(x+t)-1}>h(x+t)) \nonumber\\
    & \le \delta_\tau(x)\overline{F}(x). \label{eq:muxt}
  \end{align}
  Now define the sequence of stopping times~$\{\tau_k\}_{k\ge0}$ by
  \begin{equation}
    \label{eq:taukdef}
    \tau_0=0, \qquad
    \tau_k=\min\{n:n>\tau_{k-1},\,S_n\le{}S_{\tau_{k-1}}\}, ~~ k\ge1,
  \end{equation}
  so that $\tau_k$ is the $k$th decreasing ladder time (and in
  particular $\tau_1=\tau$).  For all $k$, since $S_{\tau_k}\le0$, it
  follows from \eqref{eq:muxt} and the temporal and spatial
  homogeneity of the random walk~$\{S_n\}$ that
  \begin{displaymath}
%    \label{eq:tkm}
    \P(\tau_k < \mu(x) \leq \tau_{k+1}, S_{\mu(x)-1}>h(x)~|~\sigma > \tau_k)
    \leq \delta_\tau(x) \overline{F}(x),
    \qquad\text{for all $x\ge x_0$}.
  \end{displaymath}
  Since also $\sigma$ is a.s.\ finite, it follows that, for any
  $x\ge{}x_0$,
  \begin{align*}
    \P(A_{\sigma,2}(x))
    & = \sum_{k\ge0}\P(\tau_k<\mu(x)\le\tau_{k+1},\,
    \sigma\ge\mu(x),\, S_{\mu(x)-1} > h(x))\\
    & \le \sum_{k\ge0}\P(\tau_k<\mu(x)\le\tau_{k+1},\,
    \sigma>\tau_k,\, S_{\mu(x)-1} > h(x))\\
    & = \sum_{k\ge0}\P(\sigma>\tau_k)\P(\tau_k<\mu(x)\le\tau_{k+1},\,
    S_{\mu(x)-1} > h(x) \,|\, \sigma>\tau_k)\\
    & \le \delta_\tau(x) \overline{F}(x)\sum_{k\ge0}\P(\sigma>\tau_k)\\
    & \le \delta_\tau(x) \overline{F}(x)\sum_{k\ge0}\P(\sigma>k)\\
    & = \delta_\tau(x) \overline{F}(x)\E\sigma,
  \end{align*}
  so that (i) now follows.

  To prove (ii) note first that we may assume, without loss of
  generality, that $\P(\sigma>0)=1$ (for otherwise we may simply
  condition on the event $\{\sigma>0\}$, which is assumed to have a
  nonzero probability).  The given conditions on $\sigma$ then imply
  that $\tau\le\sigma$ a.s..  Thus, for all $x>0$,
  $\P(A_{\tau,2}(x))\le\P(A_{\sigma,2}(x))$ and so the result follows
  immediately.

\end{proof}

\begin{lem}
  \label{r:dsms}
  Let $\sigma$ be any stopping time such that $\E\sigma<\infty$.  Then
  \begin{displaymath}
    \lim_{x\to\infty}\delta_\sigma(x) = 0
    \quad \text{ if and only if } \quad
    \lim_{x\to\infty}\frac{\P(M_{\sigma}>x)}{\overline{F}(x)} = \E\sigma.
  \end{displaymath}
\end{lem}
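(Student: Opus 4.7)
The plan is to exploit the disjoint decomposition $\{M_\sigma > x\} = A_{\sigma,1}(x) \cup A_{\sigma,2}(x)$ together with Lemma~\ref{r:as1}, which already pins down the asymptotics of $\P(A_{\sigma,1}(x))$. Everything then reduces to an elementary fact about tail suprema of null sequences.

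First I would observe that $\{M_\sigma > x\} = \{\mu(x) \le \sigma\}$ (using $\E\sigma<\infty$ so that $\sigma<\infty$ a.s.), and, since $S_{\mu(x)-1} \le x$ by the definition of $\mu(x)$, the events $A_{\sigma,1}(x)$ and $A_{\sigma,2}(x)$ form a disjoint partition of $\{M_\sigma > x\}$. Dividing the identity $\P(M_\sigma > x) = \P(A_{\sigma,1}(x)) + \P(A_{\sigma,2}(x))$ by $\overline{F}(x)$ and applying Lemma~\ref{r:as1} to the first summand gives
\begin{equation*}
\lim_{x\to\infty}\frac{\P(M_\sigma > x)}{\overline{F}(x)} = \E\sigma
\quad\Longleftrightarrow\quad
\lim_{x\to\infty}\frac{\P(A_{\sigma,2}(x))}{\overline{F}(x)} = 0.
\end{equation*}

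It remains to show that this right-hand convergence is equivalent to $\delta_\sigma(x)\to 0$. One direction is immediate from the definition, since $\P(A_{\sigma,2}(x))/\overline{F}(x) \le \delta_\sigma(x)$. For the reverse, I would invoke the elementary fact that any $f:\Rp\to\R$ with $f(y)\to 0$ as $y\to\infty$ also satisfies $\sup_{y\ge x}f(y)\to 0$ as $x\to\infty$, applied to $f(y)=\P(A_{\sigma,2}(y))/\overline{F}(y)$.

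There is no serious technical obstacle here; the statement is essentially bookkeeping once Lemma~\ref{r:as1} is in hand. The only reason the slightly stronger quantity $\delta_\sigma(x)$ (a tail supremum rather than a pointwise value) is introduced at all is that the later results, in particular Lemma~\ref{r:tausigma}, will need uniform control on $\P(A_{\tau,2}(y))/\overline{F}(y)$ for all $y\ge x$ rather than at $y=x$ alone; the passage from pointwise convergence to convergence of the tail supremum in the present lemma is what licenses that later use.
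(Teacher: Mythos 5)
Your proof is correct and follows essentially the same route as the paper: the disjoint decomposition $\P(M_\sigma>x)=\P(A_{\sigma,1}(x))+\P(A_{\sigma,2}(x))$ combined with Lemma~\ref{r:as1}, plus the trivial equivalence between $\P(A_{\sigma,2}(x))/\overline{F}(x)\to0$ and $\delta_\sigma(x)\to0$. The paper merely states this as ``immediate''; your write-up fills in the same bookkeeping.
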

\begin{proof}
  We have $\P(M_{\sigma}>x)=\P(A_{\sigma,1}(x))+\P(A_{\sigma,2}(x))$, so
  that the result is immediate from Lemma~\ref{r:as1}.
\end{proof}

\begin{proof}[Proof of Theorem~\ref{r:ms}]
  In the case $\E(\sigma)<\infty$, the proofs of both (i) and (ii) are
  immediate from Lemmas~\ref{r:deltatau}, \ref{r:tausigma} and
  \ref{r:dsms}.  The extension to the case $\E(\sigma)=\infty$ is by a
  simple truncation argument.
\end{proof}

\begin{proof}[Proof of Corollary~\ref{r:kl}]
  Given $G\in\SK$, let $\{\phi_n\}_{n\ge1}$ be a sequence of i.i.d.
  random variables with distribution function~$G^+$.  Take the
  sequence~$\{\xi_n\}_{n\ge1}$ of the present paper to be given by
  $\xi_n=\phi_n-b$ for all $n$, where $b$ is chosen sufficiently large
  that these random variables each have a negative mean.  Since the
  property that a distribution belongs to $\SK$ is easily shown to be
  shift invariant, the common distribution function $F$ of the random
  variables $\xi_n$ belongs to $\SK$.  Thus also $F$ is long-tailed.

  To show that $G$ is subexponential we apply Theorem~\ref{r:ms} with
  $\sigma\equiv2$ to obtain
  \begin{align}
    \P(\phi_1 + \phi_2 > x)
    & = \P(\xi_1 + \xi_2 > x - 2b) \nonumber \\
    & \le \P(M_2 > x - 2b) \nonumber \\
    & \sim 2 \overline{F}(x - 2b) \nonumber \\
    & \sim 2 \overline{G^+}(x),
    \qquad\text{as $x\to\infty$}, \label{eq:gse1}
  \end{align}
  where the last line above follows since $G\in\SK$ implies that $G^+$
  is long-tailed.  Further,
  \begin{align}
    \P(\phi_1 + \phi_2 > x)
    & \ge \P(\phi_1>x) + \P(\phi_2>x) - \P(\phi_1>x, \phi_2>x) \nonumber\\
    & = 2\overline{G^+}(x) - (\overline{G^+}(x))^2 \nonumber \\
    & \sim 2 \overline{G^+}(x),
    \qquad\text{as $x\to\infty$}. \label{eq:gse2}
  \end{align}
  Hence, from \eqref{eq:gse1} and \eqref{eq:gse2}, $G$ is
  subexponential as required.
\end{proof}

%\begin{changebar}
\begin{proof}[Proof of Corollary~\ref{r:ind}]
  Under the conditions of Theorem~\ref{r:ms}~(i), it follows from
  Corollary~\ref{r:kl} that $F$ is subexponential.  Hence, for any
  $n\ge1$, $\P(S_n>x)\sim~n\overline{F}(x)$ as $x\to\infty$.  If the
  stopping time~$\sigma$ is independent of the sequence~$\{\xi_n\}$ a
  simple truncation argument now gives
  \begin{equation*}
    \liminf_{x\to\infty}\frac{\P(S_{\sigma}>x)}{\overline{F}(x)} \ge \E\sigma.
  \end{equation*}
  Since $\P(S_\sigma>x)\le\P(M_\sigma>x)$ for all $x$, the result now
  follows from Theorem~\ref{r:ms}~(i).
\end{proof}
%\end{changebar}

\begin{rem}
  Theorem~\ref{r:ms} can also be used to give a (rather circuitous)
  probabilistic proof of the result that if $G\in\SK$ then $G^s$ is
  subexponential.  By taking $\sigma\equiv\tau$ in the theorem, and
  using standard renewal theory, we may show that, for the shifted
  version~$F$ of $G$, defined as in the above proof,
  \begin{displaymath}
    \lim_{x\to\infty}\frac{1}{\overline{F^s}(x)}\P(M>x)
    = \frac{1}{m}.
  \end{displaymath}
  That $F^s$, and so $G^s$, is subexponential now follows from the
  converse to Veraverbeke's Theorem proved by Korshunov (1997).
\end{rem}

\section*{Acknowledgement}

The authors are grateful to Claudia Kl\"uppelberg for some very
helpful discussions, to Denis Denisov for a very valuable
simplification of the proof of Lemma~\ref{r:as1}, and to the referee
for a very thorough reading of the paper and suggested improvements.

\appendix

\section*{Appendix}

We give here some auxiliary results relating to the successive ladder
heights and to the maximum of the process~$\{S_n\}$.  We also prove
the last statement of Remark~\ref{r:rem1}.

Define $\eta=\min\{n\ge 1: S_n>0\} \le \infty$, and let
\begin{equation}
  \label{eq:pdef}
  p = \P(\eta = \infty) = \P(M=0).
\end{equation}
Note that $0<p<1$.  Let $\{\psi_n\}_{n\ge1}$ be a sequence of i.i.d.\
copies of a positive random variable~$\psi$ such that, for all
(measurable) $B\subseteq\Rp$,
\begin{displaymath}
  \P(\psi\in B) = \P(S_\eta\in B \,|\, \eta<\infty).
\end{displaymath}
Let $\nu$ be a random variable, independent of the above sequence,
such that
\begin{displaymath}
  \P(\nu=n) = p(1-p)^n, \qquad n=0,1,2,\dots.
\end{displaymath}
Then it is a standard result that
$$
M =_D \sum_{i=1}^{\nu} \psi_i
$$
(here $\sum_1^0 = 0$ by definition).
For each $n\ge1$, define also
\begin{equation}
  \label{eq:tdef}
  T_n = \sum_{i=1}^{n} \psi_i.
\end{equation}

\begin{lem}
  \label{r:lt}
  Under the conditions~\eqref{eq:neg} and \eqref{eq:lt}, for all $c>0$
  in the case where $F$ is nonlattice, and for all positive
  multiples~$c$ of the span in the case where $F$ is lattice,
  \begin{align}
    \lim_{x\to\infty} \frac{\P(\psi\in(x,x+c])}{\overline{F}(x)}
    & = \frac{pc}{(1-p)m}, \label{eq:lt1}\\[1ex]
    \liminf_{x\to\infty} \frac{\P(T_n\in(x,x+c])}{\overline{F}(x)}
    & \ge \frac{npc}{(1-p)m}, \qquad n \ge 2, \label{eq:lt2}\\[1ex]
    \liminf_{x\to\infty} \frac{\P(M\in(x,x+c])}{\overline{F}(x)}
    & \ge \frac{c}{m}. \label{eq:lt3}
  \end{align}
\end{lem}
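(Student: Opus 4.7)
The strategy is to establish \eqref{eq:lt1} first using fluctuation-theoretic identities, then to derive \eqref{eq:lt2} by a single-large-value lower bound, and finally to deduce \eqref{eq:lt3} by conditioning on $\nu$ in the representation $M =_D T_\nu$. The starting point for \eqref{eq:lt1} is the pre-$\eta$ decomposition
\[
  (1-p)\,\P(\psi\in(x,x+c]) = \int_0^\infty V(dt)\,F((x+t,\,x+t+c]),
\]
where $V(dt) = \sum_{n\ge0}\P(S_n\in -dt,\,\eta>n)$. By standard fluctuation-theoretic duality (time reversal of $\xi_1,\dots,\xi_n$), $V$ coincides with the renewal measure of the descending ladder heights $\chi_k = -(S_{\tau_k}-S_{\tau_{k-1}})$. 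The Sparre-Andersen identity gives $\E\tau = 1/p$, hence by Wald $\E\chi_1 = m/p$, so Blackwell's renewal theorem yields $V([t,t+c]) \to pc/m$ as $t\to\infty$ (with the obvious lattice variant). I then split the integration at some large but fixed $A$: on $[0,A]$, uniform long-tailedness gives $F((x+t,x+t+c]) = o(\overline{F}(x))$ so the contribution is $V([0,A])\cdot o(\overline{F}(x)) = o(\overline{F}(x))$; on $[A,\infty)$, a Fubini rearrangement combined with the Blackwell density of $V$ and the long-tailedness of $\overline F$ yields a contribution asymptotic to $(pc/m)\,\overline{F}(x)$, modulo errors vanishing as $A\to\infty$. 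Letting $x\to\infty$ and then $A\to\infty$ yields \eqref{eq:lt1}.

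For \eqref{eq:lt2} with $n\ge2$, I use a single-large-value lower bound. For fixed $b>0$ and each $i\in\{1,\dots,n\}$, let
\[
  E_i = \{\psi_j\in[0,b]\text{ for all }j\ne i,\ T_n\in(x,x+c]\}.
\]
These events are pairwise disjoint once $x > 2(n-1)b + c$, since two simultaneously large $\psi_i$'s would push $T_n$ beyond $x+c$. Conditioning on $\{\psi_j\}_{j\ne i}$ whose sum $s$ lies in $[0,(n-1)b]$, and applying \eqref{eq:lt1} together with the long-tailedness of $\psi$ (inherited from that of $F$ via \eqref{eq:lt1}), I obtain
\[
  \P(E_i)\sim \frac{pc}{(1-p)m}\,\overline{F}(x)\,\P(\psi\le b)^{n-1}
\]
uniformly in $i$. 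Summing over $i$ gives the factor $n$, and letting $b\to\infty$ so that $\P(\psi\le b)\to1$ establishes \eqref{eq:lt2}.

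For \eqref{eq:lt3}, condition on $\nu$ in $M=_D\sum_{i=1}^\nu\psi_i$:
\[
  \P(M\in(x,x+c]) = \sum_{n=1}^\infty p(1-p)^n\,\P(T_n\in(x,x+c]).
\]
Fatou's lemma combined with \eqref{eq:lt2} then gives
\[
  \liminf_{x\to\infty}\frac{\P(M\in(x,x+c])}{\overline{F}(x)} \ge \sum_{n=1}^\infty p(1-p)^n\cdot\frac{npc}{(1-p)m} = \frac{\E\nu\cdot pc}{(1-p)m} = \frac{c}{m},
\]
using $\E\nu = (1-p)/p$. The main technical hurdle is the renewal step in \eqref{eq:lt1}: reconciling Blackwell's theorem (which controls $V$ on fixed windows as $t\to\infty$) with the long-tailed behaviour of $\overline F$ (as $x\to\infty$) requires careful management of the interchange of the limits $x\to\infty$ and $A\to\infty$, plus the standard modifications for the lattice case.
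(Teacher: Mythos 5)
Your proof is correct and follows essentially the same route as the paper: the same occupation-measure identity expressing $(1-p)\P(\psi\in(x,x+c])$ as an integral of $F((x+t,x+t+c])$ against the pre-$\eta$ (taboo) renewal measure, a Blackwell-type limit for that measure, a disjoint ``one large summand'' lower bound for $T_n$, and the identical Fatou step for $M$. The only differences are cosmetic: you split the integral at a fixed level $A$ and truncate at a fixed $b$ where the paper uses the slowly growing $h(x)$, you treat general $n$ directly rather than by induction from $n=2$, and you supply the duality/Wald derivation of the limit $pc/m$ that the paper merely asserts.
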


\begin{proof}

The results are reasonably well known.  However, the lemma is adapted
to the needs of the present paper, and, for completeness, we give
also proofs here.  We restrict ourselves to the nonlattice case.

Let $\hat{H}$ denote the ``taboo'' renewal measure on
$\Rm=(-\infty,0]$ given by, for $B\subseteq\Rm$,
\begin{displaymath}
  \hat{H}(B) = \sum_{n=0}^{\infty}\P(S_n \in B, M_n =0).
\end{displaymath}
Note that
\begin{equation}\label{eq:hh1}
  \lim_{x\to\infty} \hat{H}((-x,-x+c]) = \frac{pc}{m}, \qquad c>0,
\end{equation}
and hence there exist finite positive constants $a$, $b$ such that
\begin{equation}\label{eq:hh2}
  \hat{H}((-x,0]) \le ax+b, \qquad x\ge 0.
\end{equation}
Then, for any $c>0$ and all $x\ge0$,
\begin{equation}\label{eq:psi}
  \P(\psi \in (x,x+c]) = \frac{1}{1-p}
  \int_0^{\infty} \hat{H}(-dt)
  \left[\overline{F}(x+t)-\overline{F}(x+t+c)\right].
\end{equation}
Since $F$ satisfies \eqref{eq:lt}, we can choose a function
$h:\Rp\to\Rp$ satisfying the earlier conditions
\eqref{eq:h1}--\eqref{eq:h3} and such that
\begin{displaymath}
  h(x)\left[\overline{F}(x)
    - \overline{F}(x+h(x)+c)\right] = o(\overline{F}(x)),
  \qquad\text{as $x\to\infty$}.
\end{displaymath}
Then, from \eqref{eq:hh2},
\begin{align}
  0 & \le \int_0^{h(x)} \hat{H}(-dt)
  \left[\overline{F}(x+t)-\overline{F}(x+t+c)\right] \nonumber\\
  & \le \left(ah(x) +b\right)
  \left[\overline{F}(x) - \overline{F}(x+h(x)+c)\right] \nonumber\\
  & = o(\overline{F}(x))
  \qquad\text{as $x\to\infty$}. \label{eq:psia}
\end{align}
Further
\begin{align}
  \frac{p}{m} \int_{h(x)}^{\infty}
  \left[\overline{F}(x+t)-\overline{F}(x+t+c)\right]\,dt
  & = \frac{p}{m} \int_{h(x)}^{h(x)+c}\overline{F}(x+t)\,dt \nonumber\\
  & \sim \frac{pc}{m} \overline{F}(x)
  \qquad\text{as $x\to\infty$}, \label{eq:psib}
\end{align}
since $F$ satisfies \eqref{eq:lt} and by the condition~\eqref{eq:h3}
on $h$.  Finally, it follows from \eqref{eq:hh1} that, given
$\varepsilon>0$, for all sufficiently large $x$, and hence $h(x)$,
\begin{align}
  \left\lvert
    \int_{h(x)}^{\infty}
    \left(\hat{H}(-dt) - \frac{p}{m}\,dt\right)
    \left[\overline{F}(x+t)-\overline{F}(x+t+c)\right]
  \right\rvert \hspace{-10em}\nonumber\\
  & \le \varepsilon \sum_{k=0}^\infty
  \left[\overline{F}(x+h(x)+kc)-\overline{F}(x+h(x)+(k+2)c)\right] \nonumber\\
  & \le 2\varepsilon\overline{F}(x). \label{eq:psic}
\end{align}
The result \eqref{eq:lt1} now follows from
\eqref{eq:psi}--\eqref{eq:psic}.

To show \eqref{eq:lt2} note that, from \eqref{eq:h1}--\eqref{eq:h3},
\begin{align*}
  \P (T_2 \in (x,x+c])
  & \geq \P (T_2 \in (x,x+c], \psi_1 \leq h(x)) +
  \P (T_2 \in (x,x+c], \psi_2 \leq h(x))\\
  & \sim 2\P (\psi \in (x,x+c]),  \qquad\text{as $x\to\infty$}.
\end{align*}
Hence the result~\eqref{eq:lt2} follows for $n=2$ from \eqref{eq:lt1};
the result for
general $n$ now follows by induction arguments. Finally, the
result~\eqref{eq:lt3} follows from Fatou's lemma and \eqref{eq:lt2}, since
\begin{align*}
  \liminf_{x\to\infty}\frac{\P(M\in(x,x+c])}{\overline{F}(x)}
  & = \liminf_{x\to\infty}\sum_{n\ge1}\P(\nu=n)
  \frac{\P(T_n\in(x,x+c])}{\overline{F}(x)}\\
  & \ge \sum_{n\ge1}\P(\nu=n)
  \liminf_{x\to\infty}\frac{\P(T_n\in(x,x+c])}{\overline{F}(x)}\\
  & \ge \frac{c}{m}.
\end{align*}
\end{proof}

We now prove the claim made in Remark~\ref{r:rem1}.  Suppose that
the conditions of Theorem~\ref{r:ms}~(i) hold and that $\sigma<\infty$
a.s.  Given $\varepsilon>0$, we can find a positive integer $K$ and
$L>0$ such that $\P(S_K>-L,\,\sigma\le{}K)\ge1-\varepsilon$.  Then
\begin{align}
  \P(M>x,\,\sigma\le{}K)
  & \ge \P(S_K>-L,\,M>x,\,\sigma\le{}K) \nonumber\\
  & \ge (1-\varepsilon)\P(M>x+L) \label{eq:shift}\\
  & = (1+o(1))(1-\varepsilon)\P(M>x) \qquad\text{as $x\to\infty$}, \nonumber
\end{align}
where \eqref{eq:shift} above follows since the distribution of $M$ is
invariant under the obvious shift operation.

From the above, Theorem~\ref{r:ms}~(i), and Veraverbeke's Theorem
(Veraverbeke (1977)),
\begin{align*}
  \P(M_\sigma>x) & \le \P(M_K>x) + \P(M>x,\,\sigma>K) \\
  & \le (1+o(1))(K\overline{F}(x)+\frac{\varepsilon}{m}\overline{F^s}(x))
  \qquad\text{as $x\to\infty$}.
\end{align*}
Hence, since $\overline{F}(x)=o(\overline{F^s}(x))$ as $x\to\infty$,
we have
\begin{math}
  \limsup_{x\to\infty}\P(M_\sigma>x)/\overline{F^s}(x) \le \varepsilon/m.
\end{math}
Now let $\varepsilon\to0$ to obtain the required result.

%\begin{changebar}
Finally, in the following further corollary to Theorem~\ref{r:ms}, we
consider the extent to which Theorem~\ref{r:ms}~(i) and
Corollary~\ref{r:ind} remain true when the condition~\eqref{eq:neg} is
dropped.

\begin{cor}\label{r:pos}
  Suppose that $F\in\SK$ (which, by Corollary~\ref{r:kl} implies that
  $F$ is subexponential and so satifies~\eqref{eq:lt}), that the
  corresponding distribution of the random variables~$\xi_n$ has a
  finite mean, but that \eqref{eq:neg} does not necessarily hold.  Let
  $\sigma<\infty$ be a stopping time such that, for some function~$h$
  satisfying~\eqref{eq:h3},
  \begin{equation}
    \label{eq:pcond}
    \P(\sigma > h(x)) = o(\overline{F}(x)) \qquad\text{as $x\to\infty$}.
  \end{equation}
  Then, again,
  \begin{equation}
    \label{eq:psigma}
    \lim_{x\to\infty}\frac{\P(M_{\sigma}>x)}{\overline{F}(x)} = \E\sigma.
  \end{equation}
  If, additionally, $\sigma$ is independent of the
  sequence~$\{\xi_n\}$, then~\eqref{eq:psigma} also holds with
  $M_{\sigma}$ replaced by $S_{\sigma}$.
\end{cor}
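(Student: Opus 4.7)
The plan is to adapt the proof of Theorem~\ref{r:ms}~(i), replacing the role played by the negative-drift hypothesis~\eqref{eq:neg} with the explicit tail bound~\eqref{eq:pcond} on $\sigma$. First, since $\overline F(x)\to 0$, the hypothesis~\eqref{eq:pcond} forces $h(x)\to\infty$; thus property~\eqref{eq:h2} holds automatically, while \eqref{eq:h1} and \eqref{eq:h4} play no role below.

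Consider first the case $\E\sigma<\infty$. Both halves of Lemma~\ref{r:as1} survive in this setting: the lower bound uses only $h(x)\to\infty$ and the a.s.\ finiteness of each $S_n$ and $M_n$, while the upper bound uses only $\sum_n\P(\sigma>n)=\E\sigma$ together with~\eqref{eq:h3}; neither appeals to \eqref{eq:neg}. Hence $\P(A_{\sigma,1}(x))\sim\E\sigma\,\overline F(x)$, and since $A_{\sigma,1}(x)\subset\{M_\sigma>x\}$ the lower bound in~\eqref{eq:psigma} follows at once. It therefore remains to show $\P(A_{\sigma,2}(x))=o(\overline F(x))$.

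For this step Lemma~\ref{r:deltatau} is unavailable, since it relies on properties of the renewal measure of $\{S_n\}$ that require negative drift. I would instead split
\[
  \P(A_{\sigma,2}(x)) \le \P(A_{\sigma,2}(x),\,\sigma\le h(x)) + \P(\sigma>h(x)),
\]
the second term being $o(\overline F(x))$ by~\eqref{eq:pcond}. On the event in the first term the walk must, in no more than $\lceil h(x)\rceil$ steps, both reach a level exceeding $h(x)$ and then cross the level $x$ with $S_{\mu(x)-1}>h(x)$. Conditioning on the step at which the walk first exceeds $h(x)$ and on the corresponding overshoot, and applying the strong Markov property together with the uniform asymptotic $\overline F(x-h(x))\sim\overline F(x)$, one can bound this probability by a quantity of the order of $\int_{h(x)}^{x-h(x)}\overline F(t)\overline F(x-t)\,dt$, which is $o(\overline F(x))$ by the consequence of $F\in\SK$ derived inside the proof of Theorem~\ref{r:se3} (see~\eqref{eq:small2}). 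Making this convolution estimate rigorous is the step I expect to be the main technical obstacle.

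When $\sigma<\infty$ a.s.\ but $\E\sigma=\infty$, no further upper bound is needed: the lower bound already yields $\liminf=\infty$, which is the conclusion. For the independent case in the last sentence of the corollary, I would proceed exactly as in Corollary~\ref{r:ind}: the upper bound follows from $\P(S_\sigma>x)\le\P(M_\sigma>x)$ and the just-proved~\eqref{eq:psigma}, while the lower bound comes from the decomposition $\P(S_\sigma>x)=\sum_n\P(\sigma=n)\P(S_n>x)$, the subexponential asymptotic $\P(S_n>x)\sim n\overline F(x)$ for each fixed $n$, a truncation, and Fatou's lemma.
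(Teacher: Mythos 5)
Your lower bound (Lemma~\ref{r:as1} applied verbatim, noting it never uses~\eqref{eq:neg}) and your treatment of the final assertion about $S_\sigma$ both match what is needed and are correct. The problem is the upper bound, where you depart from the paper and where the step you yourself flag as ``the main technical obstacle'' is a genuine gap, not a routine technicality. Without negative drift, the machinery behind Lemma~\ref{r:deltatau} is gone for a structural reason: the uniform local bound on the renewal measure ($R[y,y+x]\le a_1+a_2x$) and the downcrossing identity both fail, and in your direct two-big-jumps estimate the measure $\sum_{n\le h(x)}\P(S_n\in dt,\,M_n\le x)$ that you must integrate against $\overline F(x-t)$ is \emph{not} comparable to $\overline F(t)\,dt$ on the middle range $(h(x),x-h(x))$. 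For the walk to reach a level $t$ far above the typical range $n\,\E\xi^+$ within $n\le h(x)$ steps requires one big jump, so the local mass near $t$ is of order (number of available steps)${}\times F(dt-\cdot)$, and the conditioning on $\{\sigma>n\}$ cannot simply be factored out since that event is not independent of the earlier big jump. What you end up needing is an estimate like $\int_{h(x)}^{x-h(x)}F(dt)\,\overline F(x-t)$ with extra combinatorial factors, which is a different object from the integral $\int_{h(x)}^{x-h(x)}\overline F(t)\overline F(x-t)\,dt$ controlled by~\eqref{eq:small2}; it is not clear that $F\in\SK$ alone closes this.

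The paper's proof avoids all of this with a short reduction that you should adopt. Fix any $a>\E\xi$ and set $\tilde S_n=\sum_{i=1}^n(\xi_i-a)$, $\tilde M_n=\max_{0\le i\le n}\tilde S_i$; the tilted walk has negative drift and its increment distribution is a shift of $F$, hence still in $\SK$. Since $M_\sigma\le\tilde M_\sigma+a\sigma$,
\begin{equation*}
  \P(M_\sigma>x)\le\P(\sigma>h(x))+\P(\tilde M_\sigma>x-ah(x)),
\end{equation*}
the first term being $o(\overline F(x))$ by~\eqref{eq:pcond} and the second being $\sim\E\sigma\,\overline F(x-ah(x)+a)\sim\E\sigma\,\overline F(x)$ by Theorem~\ref{r:ms}~(i) applied to the tilted walk, using that $h$ may be replaced by $ah-a$ in~\eqref{eq:h3}. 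This is where the hypothesis~\eqref{eq:pcond} actually earns its keep --- it pays for the discrepancy $a\sigma$ between the two walks --- whereas in your scheme it only removes the event $\{\sigma>h(x)\}$ and leaves the hard part untouched. One further small point: \eqref{eq:pcond} does not by itself force $h(x)\to\infty$ (take $\sigma$ bounded and $h$ constant), so if you rely on~\eqref{eq:h2} you should say that $h$ may be modified to satisfy it.
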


\begin{proof}
%   Note first that, since the function~$h$ satisfies~\eqref{eq:h3}, so
%   also does any constant multiple of this function.
  Choose any $a>\E\xi$.  For each $n\ge0$, define $\tilde{S}_0=0$,
  $\tilde{S}_n=\sum_{i=1}^n(\xi_i-a)$ (with $\tilde{S}_0=0$) and
  $\tilde{M}_n=\max_{0\le{}i\le{}n}\tilde{S}_i$.  Then, for each $x$,
  \begin{align}
    \P(M_{\sigma} > x)
    & \le \P(\tilde{M}_{\sigma} + a\sigma > x) \nonumber \\
    & \le \P(a\sigma > ah(x)) + \P(\tilde{M}_{\sigma} > x-ah(x)) \nonumber \\
    & \sim \E\sigma \overline{F}(x-ah(x)+a)
    \qquad\text{as $x\to\infty$} \label{eq:p2}\\
    & \sim \E\sigma \overline{F}(x)
    \qquad\text{as $x\to\infty$} \label{eq:p3}.
  \end{align}
  where \eqref{eq:p2} follows from Theorem~\ref{r:ms}~(i) and
  \eqref{eq:pcond}, while \eqref{eq:p3} follows since we may clearly
  replace $h(x)$ by $ah(x)-a$ in \eqref{eq:h3}.  Further it follows from
  Lemma~\ref{r:as1}, which clearly does not require the
  condition~\eqref{eq:neg}, that
  $\liminf\P(M_{\sigma}>x)/\overline{F}(x)\ge\E\sigma$.  Hence the
  result~\eqref{eq:psigma} is established.  The final assertion
  follows as in the proof of Corollary~\ref{r:ind}.
\end{proof}

Note that the condition~\eqref{eq:pcond} need not be unduly
restrictive.  For example, in the regularly varying case
$\overline{F}(x)=x^\alpha{}L(x)$ for some function~$L$ which is slowly
varying at infinity and some $\alpha<-1$ (for a finite mean), for a
function $h$ to satisfy~\eqref{eq:h3} it is sufficient that
$h(x)=o(x)$ as $x\to\infty$.  Hence the tail of the stopping
time~$\sigma$ need only be slightly lighter than that of the random
variables~$\xi_n$.
%\end{changebar}

\newpage

%\renewcommand{\baselinestretch}{1.0}

%\end{document}

\begin{center}
\begin{minipage}[t]{7cm}
\begin{flushleft}
\sc
S. Foss\\
Department of Actuarial\\ \hspace*{0.5cm}Mathematics and Statistics\\
Heriot-Watt University\\
Riccarton\\
Edinburgh\\
UK, EH14 4AS\\
Email: \rm
s.foss@ma.hw.ac.uk
\end{flushleft}
\end{minipage}
\hfill
\begin{minipage}[t]{6cm}
\begin{flushleft}
\sc
S. Zachary\\
Department of Actuarial\\ \hspace*{0.5cm}Mathematics and Statistics\\
Heriot-Watt University\\
Riccarton\\
Edinburgh\\
UK, EH14 4AS\\
Email: \rm
s.zachary@ma.hw.ac.uk
\end{flushleft}
\end{minipage}
\end{center}

\end{document}